\newcommand{\mA}{\mathcal{A}}
\newcommand{\mB}{\mathcal{B}}
\newcommand{\re}{\mathbb{R}}
\newcommand{\cpx}{\mathbb{C}}
\newcommand{\F}{\mathbb{F}}
\newcommand{\N}{\mathbb{N}}
\newcommand{\half}{\frac{1}{2}}
\newcommand{\lmd}{\lambda}
\newcommand{\eps}{\epsilon}
\newcommand{\dt}{\delta}
\def\af{\alpha}
\def\bt{\beta}
\newcommand{\sig}{\sigma}
\newcommand{\Sig}{\Sigma}
\newcommand{\mt}[1]{\mathtt{#1}}
\newcommand{\A}{\mathcal{A}}
\newcommand{\reff}[1]{(\ref{#1})}
\newcommand{\mbf}[1]{\mathbf{#1}}
\newcommand{\mc}[1]{\mathcal{#1}}
\newcommand{\bdes}{\begin{description}}
\newcommand{\edes}{\end{description}}
\newcommand{\bal}{\begin{align}}
\newcommand{\eal}{\end{align}}
\newcommand{\bnum}{\begin{enumerate}}
\newcommand{\enum}{\end{enumerate}}
\newcommand{\bit}{\begin{itemize}}
\newcommand{\eit}{\end{itemize}}
\newcommand{\bea}{\begin{eqnarray}}
\newcommand{\eea}{\end{eqnarray}}
\newcommand{\be}{\begin{equation}}
\newcommand{\ee}{\end{equation}}
\newcommand{\baray}{\begin{array}}
\newcommand{\earay}{\end{array}}
\newcommand{\bsry}{\begin{subarray}}
\newcommand{\esry}{\end{subarray}}
\newcommand{\bca}{\begin{cases}}
\newcommand{\eca}{\end{cases}}
\newcommand{\bcen}{\begin{center}}
\newcommand{\ecen}{\end{center}}
\newcommand{\bbm}{\begin{bmatrix}}
\newcommand{\ebm}{\end{bmatrix}}
\newcommand{\bmx}{\begin{matrix}}
\newcommand{\emx}{\end{matrix}}
\newcommand{\bpm}{\begin{pmatrix}}
\newcommand{\epm}{\end{pmatrix}}
\newcommand{\btab}{\begin{tabular}}
\newcommand{\etab}{\end{tabular}}
\newcommand{\thmlist}{
\begin{list}{Step 1}
{\setlength{\leftmargin}{0.6 in}\setlength{\labelwidth}{0.5 in}} }
\newcommand{\alglist}{
\begin{list}{Step 1}
{\setlength{\leftmargin}{1.1 in}\setlength{\labelwidth}{1.0 in}} }
\newtheorem{theorem}{Theorem}[section]
\newtheorem{prop}[theorem]{Proposition}
\newtheorem{lemma}[theorem]{Lemma}
\theoremstyle{definition}
\newtheorem{exm}[theorem]{Example}
\newtheorem{alg}[theorem]{Algorithm}
\numberwithin{equation}{section}
\begin{document}

\title{Symmetric Tensor Nuclear Norms}

\author{Jiawang Nie}
\address{Department of Mathematics,
University of California at San Diego,
9500 Gilman Drive, La Jolla, CA, USA, 92093.}
\email{njw@math.ucsd.edu}

\subjclass[2010]{15A69, 65K05, 90C22}

\keywords{symmetric tensor, nuclear norm,
nuclear decomposition, moment optimization,
Lasserre relaxation}

\begin{abstract}
This paper studies nuclear norms of symmetric tensors.
As recently shown by Friedland and Lim,
the nuclear norm of a symmetric tensor
can be achieved at a symmetric decomposition.
We discuss how to compute symmetric tensor nuclear norms, depending on
the tensor order and the ground field.
Lasserre relaxations are proposed for the computation.
The theoretical properties of the relaxations are studied.
For symmetric tensors, we can compute their nuclear norms,
as well as the nuclear decompositions.
The proposed methods can be extended to nonsymmetric tensors.
\end{abstract}

\maketitle

\section{Introduction}

Let $\F$ be a field (either the real field $\re$ or the complex one $\cpx$).
Let $\F^{n_1 \times \cdots \times n_m}$ be the space of tensors
of order $m$ and dimension $(n_1, \ldots, n_m)$.
Each tensor in $\F^{n_1 \times \cdots \times n_m}$
can be represented by a $m$-dimensional hypermatrix (or array)
\[
\mA = ( \mA_{i_1 \ldots i_m} )
\]
with each entry $\mA_{i_1 \ldots i_m} \in \F$ and
$1 \leq i_1 \leq n_1, \ldots, 1 \leq i_m \leq n_m$.
For two tensors $\mA, \mB \in \F^{n_1 \times \cdots \times n_m}$,
their {\it hermitian inner product} is defined as
\be \label{inn:<A,B>}
\mA \bullet \mB  :=
\sum_{1 \leq i_j \leq n_j, j=1,\ldots,m }
\mA_{i_1 \ldots i_m} \bar{\mB}_{i_1 \ldots i_m}.
\ee
(The bar $\bar{\empty}$ denotes the complex conjugate.)
This induces the {\it Hilbert-Schmidt norm}
\be \label{HSnm:||A||}
\| \mA \| \, := \, \sqrt{   \mA \bullet \mA  }.
\ee
For vectors $x^{(1)} \in \F^{n_1}$, $\ldots$,  $x^{(m)} \in \F^{n_m}$,
$x^{(1)} \otimes \cdots \otimes x^{(m)}$
denotes their standard tensor product, i.e.,
\[
(x^{(1)} \otimes \cdots \otimes x^{(m)})_{i_1 \ldots i_m} =
(x^{(1)})_{i_1} \cdots  (x^{(m)})_{i_m}.
\]
The {\it spectral norm} of $\mA$, depending on the field $\F$,
is defined as
\be \label{spc||A||:nsm}
\| \mA \|_{\sig,\F} := \max \{
| \mA \bullet x^{(1)} \otimes \cdots \otimes x^{(m)} | : \,
\| x^{(j)} \|  =1, x^{(j)} \in \F^{n_j}  \}.
\ee
In the above, $\| \cdot \|$ denotes the standard Euclidean vector norm.
The {\it nuclear norm} of $\mA$, also depending on $\F$, is defined as
\be \label{nuc||A||:nsy}
\| \mA \|_{\ast,\F}  := \min \left\{ \sum_{i=1}^r |\lmd_i|
\left| \baray{c}
\mA = \Sig_{i=1}^r  \lmd_i v^{(i,1)} \otimes \cdots \otimes v^{(i,m)}, \\
\| v^{(i,j)} \| = 1, v^{(i,j)} \in \F^{n_j}
\earay\right.
\right \}.
\ee
The spectral norm $\| \cdot \|_{\sig,\F}$
is dual to the nuclear norm $\| \cdot \|_{\ast,\F}$ (cf.~\cite{FriLim14b}):
\[
\| \mA \|_{\sig,\F} = \max \{
| \mA \bullet \mc{X}  |: \, \| \mc{X} \|_{\ast,\F}  = 1 \},
\]
\[
\| \mA \|_{\ast,\F} = \max \{
|  \mA \bullet \mc{Y} |: \, \| \mc{Y} \|_{\sig,\F}  = 1 \}.
\]
Spectral and nuclear tensor norms have important applications,
e.g., signal processing and blind identification (\cite{LimCom10,LimCom14}),
tensor completion and recovery (\cite{MHWG,YuaZha15}),
low rank tensor approximations (\cite{FriOtt13,NW14,ZLQ12}).
%
%
When the order $m>2$, the computation of spectral and nuclear norms is NP-hard
(\cite{FriLim14b,FriLim16a,HL13}).
In \cite{Der13}, the nuclear norms of several interesting tensors
were studied. We refer to \cite{Land12,Lim13} for tensor theory and applications.

This paper focuses on nuclear norms of symmetric tensors.
Recall that a tensor $\mA \in \F^{n_1 \times \cdots \times n_m}$
is symmetric if $n_1 = \cdots = n_m$ and
\[
\mA_{i_1 \ldots i_m} = \mA_{j_1 \ldots j_m}
\]
whenever $(i_1, \ldots, i_m)$ is a permutation of $(j_1, \ldots, j_m)$.
Let $\mt{S}^m( \F^n)$ be the space of all $n$-dimensional symmetric tensors
of order $m$ and over the field $\F$.
For convenience, denote the symmetric tensor power
\[
x^{\otimes m} \, := \, x \otimes \cdots \otimes x \quad
(\mbox{$x$ is repeated $m$ times}).
\]
For a symmetric tensor $\A \in \mt{S}^m( \F^n)$,
its spectral and nuclear norms can be simplified as
(for $\F=\re$ or $\cpx$)
\be \label{||A||sig:sym}
\| \mA \|_{\sig,\F} = \max \{
| \mA \bullet x^{\otimes m} | : \,
\| x  \|  =1, x \in \F^n \},
\ee
\be \label{ast||A||:sym}
\| \mA \|_{\ast,\F} = \min \left\{ \sum_{i=1}^r |\lmd_i|  : \,
\mA = \sum_{i=1}^r  \lmd_i (v_i)^{\otimes m},
\| v_i \| = 1, v_i \in \F^n, \lmd_i \in \F \right \}.
\ee
The equality \reff{||A||sig:sym} can be found in
Banach \cite{banach}, Friedland \cite{Fri13},
Friedland and Ottaviani \cite{FriOtt13}, and Zhang et al. \cite{ZLQ12}.
The equality~\reff{ast||A||:sym} was recently proved
by Friedland and Lim \cite{FriLim14b}.
In \reff{ast||A||:sym}, the decomposition of $\mA$, for which
the minimum is achieved, is called a
{\it nuclear decomposition} as in \cite{FriLim14b}.
When $\mA$ is a real tensor,
\[
\| \mA \|_{\sig,\re} \leq \| \mA \|_{\sig,\cpx}, \qquad
\| \mA \|_{\ast,\re} \geq \| \mA \|_{\ast,\cpx}.
\]
The strict inequalities are possible in the above.
Explicit examples can be found in \cite{FriLim14b}
and in \S\ref{sc:num} of this paper.

The computation of tensor nuclear norms can be formulated
as a moment optimization problem.
When $\mA$ is a real cubic symmetric tensor (i.e., $m=3$),
Tang and Shah \cite{TanSha15} pointed out that
the real nuclear norm $\| \mA \|_{\ast,\re}$ is equal to
the optimal value of the moment optimization problem
\be \label{muopt:A:m=3}
\min  \quad \int_S 1 \mt{d} \mu \quad
s.t. \quad  \mc{A} = \int_S x \otimes x \otimes x \mt{d} \mu
\ee
where $\mu$ is a Borel measure variable whose support is contained in the unit sphere
\be \label{usph:S}
S \, := \, \{ x\in \re^n \mid \, \| x \| = 1 \}.
\ee
The equality constraint in \reff{muopt:A:m=3} gives cubic moments of $\mu$,
while the objective is the total mass of $\mu$.
Lasserre's hierarchy of semidefinite relaxations \cite{Lasserre01,Lasserre08}
can be applied to solve \reff{muopt:A:m=3}, as proposed in \cite{TanSha15}.
This gives a sequence of lower bounds, say, $\{ \rho_k \}$,
for the real nuclear norm $\| \mA \|_{\ast, \re}$.
It can be shown that
$\rho_k \to \| \mA \|_{\ast, \re}$ as $k \to \infty$.
However, in computational practice, it is very difficult to check the convergence,
i.e., how do we detect if $\rho_k$ is equal to, or close to, $\| \mA \|_{\ast, \re}$?
When the convergence occurs, how can we get a nuclear decomposition?
To the best of the author's knowledge,
there was few prior work on these two questions.
The major difficulty is that the flat extension condition
(cf.~\cite{CurtoF,Fialkow,Helton}),
which is often used for solving moment problems,
is usually not satisfied for solving \reff{muopt:A:m=3}.
This causes the embarrassing fact that the nuclear norm
is often not known, although it can be approximated as close as possible in theory.
Moreover, when the order $m$ is even, or the field $\F=\cpx$,
the nuclear norm $\| \mA \|_{\ast, \F}$ is no longer equal to
the optimal value of \reff{muopt:A:m=3}.

In this paper, we propose methods for computing
nuclear norms of symmetric tensors, for both odd and even orders,
over both the real and complex fields.
We give detailed theoretical analysis and computational implementation.
\bit

\item When the order $m$ is odd and $\F = \re$,
the nuclear norm $\| \mA \|_{\ast, \re}$ equals the optimal value of \reff{muopt:A:m=3},
as shown in \cite{TanSha15}.

\item When the order $m$ is even and $\F = \re$, the nuclear norm
$\| \mA \|_{\ast, \re}$ is no longer equal to the optimal value of
\reff{muopt:A:m=3}. We construct a new moment optimization problem
whose optimal value equals $\| \mA \|_{\ast, \re}$.

\item When $\F = \cpx$, we construct a new moment optimization problem
whose optimal value equals $\| \mA \|_{\ast, \cpx}$,
for both even and odd orders.

\eit
Lasserre relaxations in \cite{Lasserre01,Lasserre08}
are efficient for solving these moment optimization problems.
We can get a sequence of lower bounds for the nuclear norm $\| \mA \|_{\ast, \F}$,
which is deonoted as $\{ \| \mA \|_{k\ast, \F} \}_{k=1}^{\infty}$.
(The integer $k$ is called the relaxation order.)
We prove the asymptotic convergence
$\| \mA \|_{k\ast, \F} \to \| \mA \|_{\ast, \F}$
as the relaxation order $k \to \infty$.
In computational practice, the finite convergence often occurs,
i.e., $\| \mA \|_{k\ast, \F} = \| \mA \|_{\ast, \F}$ for some $k$.
We show how to detect $\| \mA \|_{k\ast, \F} = \| \mA \|_{\ast, \F}$
and how to compute nuclear decompositions.
This can be done by solving a truncated moment problem.
We also prove conditions that guarantee finite convergence.

The paper is organized as follows.
Section~\ref{sc:Real:oddm} discusses nuclear norms
when the order $m$ is odd and $\F = \re$.
Section~\ref{sc:Real:meven} discusses nuclear norms
when $m$ is even and $\F = \re$.
Section~\ref{sc:cpx} discusses nuclear norms
when the field $\F = \cpx$.
The numerical experiments are given in Section~\ref{sc:num}.
Some preliminary results are given in
Section~\ref{sc:prelim}.
The extensions to nonsymmetric tensors
are given in Section~\ref{sc:exten}.

\section{Preliminaries}
\label{sc:prelim}

\subsection*{Notation}\,
The symbol $\N$ (resp., $\re$, $\cpx$) denotes the set of
nonnegative integers (resp., real, complex numbers).
%
%
For $x=(x_1,\ldots,x_n)$ and $\af = (\af_1, \ldots, \af_n) \in \N^n$,
denote
\[
x^\af := x_1^{\af_1}\cdots x_n^{\af_n}, \quad
|\af| := |\af_1| + \cdots + |\af_n|.
\]
For a degree $d>0$, denote the set of monomial powers
\be \label{N[0d]}
\left\{\baray{rcl}
\N^n_{[0,d]} &:=& \{ \af \in \N^n:\, 0\leq |\af| \leq d \}, \\
\N^n_{\{d\}} &:=& \{ \af \in \N^n:\, |\af| = d \}, \quad
\N^n_{\{0,d\}} := \N^n_{\{d\}} \cup \{ 0 \}.
\earay \right.
\ee
Denote the vector of monomials:
\[
[x]_{0,m} := ( x^\af )_{ |\af| = 0, m }.
\]
The symbol $\re[x] := \re[x_1,\ldots,x_n]$
denotes the ring of polynomials in $x:=(x_1,\ldots,x_n)$
and with real coefficients,
while $\mathbb{R}[x]_d$ denotes the set of polynomials
in $\re[x]$ with degrees up to $d$.
We use $\mathbb{R}[x]_d^{hom}$  to denote the set of
homogeneous polynomials in $\re[x]$ with degree $d$.
For the complex field $\cpx$,
the $\cpx[x]$ and $\cpx[x]_d$ are similarly defined.
The $deg(p)$ denotes the total degree of a polynomial $p$.
%
%
For $t\in \re$, $\lceil t\rceil$ (resp., $\lfloor t\rfloor$)
denotes the smallest integer not smaller
(resp., the largest integer not bigger) than $t$.
%
%
%
For a matrix $A$, $A^T$ denotes its transpose.
For a symmetric matrix $X$, $X\succeq 0$ (resp., $X\succ 0$) means
$X$ is positive semidefinite (resp., positive definite).
%
%
%
The $e_i$ denotes the standard $i$th unit vector,
and $e$ is the vector of all ones.

%
%

\bigskip

In the following, we review some basics in polynomial optimization
and moment problems.
We refer to \cite{Lasserre09,Lasserre15,Laurent} for details.
A polynomial $p \in \re[x]$ is said to be a sum of squares (SOS)
if $p = p_1^2+\cdots+ p_k^2$ for some $p_1,\ldots, p_k \in \re[x]$.
The set of all SOS polynomials in $x$ is denoted as $\Sig[x]$.
For a degree $m$, denote the truncation
\[
\Sig[x]_m := \Sig[x] \cap \re[x]_m.
\]
For a tuple $g=(g_1,\ldots,g_t)$ of polynomials,
its {\it quadratic module} is the set
\[
\mbox{Qmod}(g):=  \Sig[x] + g_1 \cdot \Sig[x] + \cdots + g_t \cdot \Sig[x].
\]
The $k$th truncation of $\mbox{Qmod}(g)$ is the set
\be \label{Qk(g)}
\mbox{Qmod}(g)_k :=
\Sig[x]_{k} + g_1 \cdot \Sig[x]_{d_1} + \cdots + g_t \cdot \Sig[x]_{d_t}
\ee
where each $d_i = k - \deg(g_i)$. Note that
\[
\mbox{Qmod}(g)= \bigcup_{k\in \mathbb{N}} \mbox{Qmod}(g)_k.
\]
For a tuple $h=(h_1,\ldots,h_s)$ of polynomials,
the ideal it generates is the set
\[
\mbox{Ideal}(h) :=
h_1 \cdot \re[x] + \cdots + h_m  \cdot \re[x].
\]
The $k$th {\it truncation} of $\mbox{Ideal}(h)$ is the set
\be \label{Ik(h)}
\mbox{Ideal}(h)_{k} \, := \,
h_1 \cdot \re[x]_{k-\deg(h_1)} + \cdots + h_m  \cdot \re[x]_{k-\deg(h_m)}.
\ee
Clearly, $\mbox{Ideal}(h)=\bigcup_{k\in \mathbb{N}} \mbox{Ideal}(h)_{k}$.

Let $g,h$ be as above. Consider the set
\[
K = \{ x \in \re^n:\, h(x) = 0, \, g(x) \geq 0 \}.
\]
Clearly, if $f \in \mbox{Ideal}(h)+\mbox{Qmod}(g)$, then $f\geq 0$ on the set $K$.
The reverse is also true under certain conditions.
The set $\mbox{Ideal}(h)+\mbox{Qmod}(g)$ is said to be {\it archimedean} if
$N-\|x\|^2\in I(h)+Q(g)$ for some scalar $N>0$.

\begin{theorem} \label{thm:Put}
(\cite{Putinar})
Let $h,g,K$ be as above.
Assume $\mbox{Ideal}(h)+\mbox{Qmod}(g)$ is archimedean.
If a polynomial $f > 0$ on $K$, then
$f \in \mbox{Ideal}(h)+\mbox{Qmod}(g)$.
\end{theorem}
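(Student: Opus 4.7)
The plan is to prove this by contradiction using the Hahn--Banach separation theorem together with Haviland's representation theorem for the truncated moment problem. Set $M := \mbox{Ideal}(h)+\mbox{Qmod}(g)$ and suppose, for contradiction, that $f > 0$ on $K$ but $f \notin M$. The first step is to exploit the archimedean hypothesis: since $N-\|x\|^2 \in M$, one checks by an elementary iterative argument (multiplying by sums of squares and adjusting constants) that for every $p \in \re[x]$ there exists a constant $c_p > 0$ with $c_p \pm p \in M$. In particular $M$ is an archimedean convex cone and $M - M = \re[x]$.

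Next, I would separate $f$ from $M$. Viewing $M$ as a convex cone in the vector space $\re[x]$, standard cone-separation (a Hahn--Banach argument, applied either in the finite-dimensional truncations $\re[x]_d$ and passed to the limit, or directly using the quotient by the largest subspace contained in $M$) produces a nonzero linear functional $L: \re[x] \to \re$ such that $L \geq 0$ on $M$ and $L(f) \leq 0$. Normalize so that $L(1) = 1$, which is possible because $1 \in M$ and $L$ is not identically zero by the archimedean bound $c_p \pm p \in M$.

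The third step is to realize $L$ as integration against a positive Borel measure. Since $L$ is nonnegative on $\Sig[x] \subset M$ and satisfies $L(N - \|x\|^2) \geq 0$, iteration gives $L(p(\|x\|^2)^k) \geq 0$ and hence $|L(p)| \leq C_p$ for every $p$. By Haviland's theorem (equivalently, by extending $L$ continuously to $C(B)$ where $B = \{x : \|x\|^2 \leq N\}$ and applying the Riesz representation theorem), there is a positive Borel measure $\mu$ supported in $B$ with $L(p) = \int p \, d\mu$ for all $p \in \re[x]$. Because $L$ vanishes on $\mbox{Ideal}(h)$ and is nonnegative on each $g_i \cdot \Sig[x]$, the support of $\mu$ must lie in $\{h = 0\} \cap \{g \geq 0\} = K$.

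Finally, since $f > 0$ on the compact set $K$, we have $\inf_K f > 0$, and therefore
\[
L(f) \; = \; \int_K f \, d\mu \; \geq \; (\inf_K f) \cdot \mu(K) \; = \; (\inf_K f) \cdot L(1) \; > \; 0,
\]
contradicting $L(f) \leq 0$. The main technical obstacle is the closure/separation step: one must justify that $M$ can be separated from $f$ by a linear functional, which requires either working in finite-dimensional truncations and a compactness argument, or arguing that the archimedean property forces $M$ to be closed in the finest locally convex topology on $\re[x]$. Once this is in hand, the passage through Haviland's theorem and the support localization of $\mu$ are standard.
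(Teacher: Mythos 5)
The paper does not actually prove this statement: it is Putinar's Positivstellensatz, and the paper cites Putinar (1993) and the surrounding literature without reproducing an argument. Your sketch follows the classical proof—separate $f$ from $M := \mbox{Ideal}(h)+\mbox{Qmod}(g)$, realize the separating functional as a measure supported on $K$, and derive a contradiction from $f>0$ on $K$—so the overall strategy is right. However, two of the technical steps are justified incorrectly, and one of them would fail as stated.

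The separation step: neither of your proposed justifications works. The truncations $M\cap\re[x]_d$ need not be closed, and the archimedean hypothesis does not force $M$ to be closed in the finest locally convex topology. What the archimedean bounds $c_p\pm p\in M$ actually give is that $1$ is an algebraic interior (core) point of $M$: for $\epsilon<1/c_p$ one has $1+\epsilon p=(1-\epsilon c_p)\cdot 1+\epsilon(c_p+p)\in M$. With a core point in hand, the right tool is Eidelheit's separation theorem (the Hahn--Banach variant that requires an internal point, not closedness); this yields $L\neq 0$ with $L\geq 0$ on $M$ and $L(f)\leq 0$, and $L(1)>0$ because a functional vanishing at a core point of $M$ would vanish on all of $\re[x]=M-M$. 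The measure-representation step is the more serious gap. Haviland's theorem requires $L\geq 0$ on $\{p:\,p\geq 0\ \mbox{on}\ B\}$, and extending $L$ to $C(B)$ requires $|L(p)|\leq C\|p\|_{\infty,B}$; neither is available at this stage. All you have is $L\geq 0$ on $\Sigma[x]$ and $|L(p)|\leq c_pL(1)$ with $c_p$ coming from the archimedean iteration, and $c_p$ may greatly exceed $\|p\|_{\infty,B}$—indeed, establishing the sup-norm bound $|L(p)|\lesssim\|p\|_{\infty,K}$ is essentially equivalent to the theorem being proved, so the argument is circular. The standard way to close this is the GNS construction plus the spectral theorem: positivity of $L$ on $\Sigma[x]$ gives a pre-Hilbert space with cyclic vector $1$; the bounds $L((N-x_i^2)\sigma)\geq 0$ (using $N-x_i^2\in M$) make the multiplication operators bounded, commuting, self-adjoint; their joint spectral measure provides $\mu$. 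Your support-localization of $\mu$ to $K$ and the final contradiction are then correct as written.
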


The above theorem is called Putinar's Positivstellensatz in the literature.
Interestingly, when $f \geq 0$ on $K$,
we also have $f \in  \mbox{Ideal}(h)+\mbox{Qmod}(g)$,
under general optimality conditions (cf.~\cite{opcd}).

\bigskip

Let $\re^{\N_{[0,d]}^n}$ be the space of multi-sequences indexed by
$\af \in \N^n_{[0,d]}$ (see the notation \reff{N[0d]}).
A vector in $\re^{\N_{[0,d]}^n}$ is called a
{\it truncated multi-sequence} (tms) of degree $d$.
Every $z \in \re^{\N_{[0,d]}^n}$ can be labelled as
\[
 z \, = \, (z_\af)_{ \af \in  \N_{[0,d]}^n }.
\]
For $t\leq d$ and $z\in \mathbb{R}^{\mathbb{N}^{n}_{d}}$,
denote the truncation:
\be \label{trun:z|0,m}
z|_{ \{0,m\} } \, := \, (z_{\alpha})_{ \af \in \N^n_{ \{0,m\} }  }.
\ee
For $ p = \Sig_{ \af \in  \N_{[0,d]}^n } p_\af x^\af \in \re[x]_d$
and $z \in \re^{\N_{[0,d]}^n}$, define the product
\be \label{df:<p,y>}
\langle p, z \rangle \, := \,
\sum_{\af \in \N_{[0,d]}^n }  p_\af z_\af.
\ee
In the above, each $p_\af$ is a coefficient.
For a polynomial $q \in \re[x]_{2k}$ and a tms
$z \in \re^{ \N^n_{[0,2k]} }$,
the product $\langle q p_1p_2, z \rangle$
is a quadratic form in the coefficients of $p_1, p_2$.
The $k$th {\it localizing matrix} of $q$,
generated by a tms $z \in \re^{\N^n_{[0,2k]}}$,
is the symmetric matrix $L_q^{(k)}(z)$ such that
\be \label{locM:q}
\langle q p_1p_2, z \rangle \, = \,
vec(p_1)^T \Big( L_q^{(k)}(z) \Big) vec(p_2)
\ee
for all $p_1,p_2 \in \re[x]$ with
$\deg(p_1), \deg(p_2) \leq  2k - \lceil \deg(q)/2 \rceil$.
In the above, $vec(p_i)$ denotes the coefficient vector of $p_i$.
When $q = 1$ (the constant one polynomial),
$L_q^{(k)}(z)$ is reduced to the so-called
{\it moment matrix} and is denoted as
\be \label{moment:mat}
M_k(z):= L_{1}^{(k)}(z).
\ee
We refer to \cite{CurtoF,Helton} for localizing and moment matrices.

\section{Odd order tensors with $\F = \re$}
\label{sc:Real:oddm}

Assume the field $\F = \re$ and the order $m$ is odd.
We discuss how to computate the real nuclear norm
$\| \mA \|_{\ast,\re}$ of a tensor $\mA \in \mt{S}^m(\re^n)$.
Note that $\lmd_i (v_i)^{\otimes m} = (-\lmd_i) (-v_i)^{\otimes m}$,
when $m$ is odd. In the decomposition of $\mA$ as in \reff{ast||A||:sym},
one can generally assume $\lmd_i \geq 0$, so
\be \label{nun:As*:odd}
\| \mA \|_{\ast,\re} = \min \left\{ \sum_{i=1}^r  \lmd_i : \,
\mA = \sum_{i=1}^r  \lmd_i (v_i)^{\otimes m}, \, \lmd_i \geq 0,
\| v_i \| = 1, v_i \in \re^n \right \}.
\ee
Let $\mathscr{B}(S)$ be the set
of Borel measures supported on the unit sphere $S$ as in \reff{usph:S}.
As pointed out in \cite{TanSha15},
$\| \mA \|_{\ast,\re}$ equals the optimal value of
\be  \label{BorOpt:R:oddm}
\left\{\baray{rl}
 \min  &  \int 1 \mt{d} \mu  \\
s.t. &  \mA = \int  x^{\otimes m} \mt{d} \mu, \,\,
  \mu \in \mathscr{B}(S).
\earay \right.
\ee
Let $\mbf{a} \in \re^{ \N^n_{ \{m\} } }$ be the vector of tensor entries of
$\mA$ such that
\be \label{a=A:af}
\mbf{a}_\af = \mA_{i_1\ldots i_m} \quad \mbox{ if } \quad
x^\af = x_{i_1} \cdots x_{i_m}.
\ee
The equality constraint in \reff{BorOpt:R:oddm} is equivalent to that
\[
\mbf{a}_\af = \int x^\af \mt{d} \mu \quad
(\af \in \N^n_{ \{m\} }).
\]
Define the cone of moments
\be \label{df:scrR:0+m}
\mathscr{R}_{ \{0,m\} } := \left\{
y \in \re^{ \N^n_{ \{0,m\} } }
\left| \baray{c}
 \exists \mu \in \mathscr{B}(S) \quad s.t. \\
\,  y_\af = \int x^\af \mt{d} \mu
\,\, \forall \,\, \af \in \N^n_{ \{0,m\} }
\earay \right.
\right\}.
\ee
The cone $\mathscr{R}_{ \{0,m\} }$ is closed, convex,
and has nonempty interior \cite[Prop.~3.2]{LMOPT}.
The optimization problem \reff{BorOpt:R:oddm} is equivalent to
\be \label{miny0:Rm(S):odd}
\left\{\baray{rl}
\min  &  (y)_0  \\
s.t. &  (y)_\af = \mbf{a}_\af \,\, \big( \af \in \N^n_{ \{m\} } \big), \\
 &  y \in \mathscr{R}_{ \{0,m\} }.
\earay \right.
\ee

\subsection{An algorithm}

The cone $\mathscr{R}_{ \{0,m\} }$ can be approximated
by semidefinite relaxations. Denote the cones
\begin{align}
\label{scr(S):2k}
\mathscr{S}^{2k} & := \left\{
z \in  \left. \re^{ \N^n_{ [0,2k] } }  \right|
M_k(z) \succeq 0, \, L^{(k)}_{1-\|x\|^2}(z) = 0
\right\}, \\
\label{scr(S):2k:0+m}
\mathscr{S}^{2k}_{ \{0,m\} } & := \left\{
y \in \left. \re^{ \N^n_{ \{0,m\} } }  \right|
\exists \, z \in  \mathscr{S}^{2k}, \,  \, y = z|_{ \{0,m \} }
\right\}.
\end{align}
It can be shown that (cf.~\cite[Prop.~3.3]{LMOPT})
\be \label{SDr:R0,m:odd}
\mathscr{R}_{ \{0,m\} } = \bigcap_{k \geq m/2 }
\mathscr{S}^{2k}_{ \{0,m\} }.
\ee
This leads to the hierarchy of semidefinite relaxations
\be \label{min(y)0:mom:k}
\left\{ \baray{rl}
\| \mA \|_{k\ast,\re}  \, := \, \min\limits_{ z } &  (z)_0 \\
s.t. &  (z)_\af = \mbf{a}_\af \,\,(\af \in \N^n_{ \{m\} } ), \\
 &  z \in \mathscr{S}^{2k},
\earay \right.
\ee
for the relaxation orders
$k = m_0, m_0 +1, \ldots$, where $m_0:=\lceil m/2 \rceil$.
Since $\mathscr{R}_{ \{0,m\} } \subseteq \mathscr{S}^{2k+2}
\subseteq \mathscr{S}^{2k}$ for all $k$,
we have the monotonicity relationship
\be  \label{mrel:rhok:om}
\| \mA \|_{m_0\ast,\re}
 \leq \cdots \leq  \| \mA \|_{k\ast,\re}   \leq \cdots \leq
\| \mA \|_{\ast,\re}.
\ee

\bigskip

\begin{alg} \label{alg:R:odd}
Given a tensor $\mA \in \mt{S}^m(\re^n)$ with odd $m$,
let $k = m_0$ and do:
\bit

\item [Step 1] Solve the semidefinite relaxation \reff{min(y)0:mom:k},
for an optimizer $z^k$.

\item [Step 2]  Let $y^k := z^k|_{ \{0,m\} }$
(see \reff{trun:z|0,m} for the truncation).
Check whether $y^k \in \mathscr{R}_{ \{0,m\} }$ or not.
If yes, then $\| \mA \|_{\ast, \re} = \| \mA \|_{k\ast,\re} $ and go to Step~3;
otherwise, let $k :=k+1$ and go to Step~1.

\item [Step 3] Compute the decomposition of $y^k$ as
\[
y^k =  \lmd_1 [v_1]_{0,m} + \cdots + \lmd_r [v_r]_{0,m}
\]
with all $\lmd_i >0, v_i \in S$. This gives the nuclear decomposition
\[
\mA =  \lmd_1 (v_1)^{\otimes m} + \cdots + \lmd_r (v_r)^{\otimes m}
\]
such that $\lmd_1 + \cdots + \lmd_r = \| \mA \|_{\ast,\re}$.

\eit

\end{alg}

In the above, the method in \cite{ATKMP} can be applied to
check whether $y^k \in \mathscr{R}_{ \{0,m\} }$ or not.
If yes, a nuclear decomposition can also be obtained.
It requires to solve a moment optimization problem
whose objective is randomly generated.

\subsection{Convergence properties}

The dual cone of the set $\mathscr{R}_{ \{0,m\} }$ is
\be
\mathscr{P}(S)_{0,m} := \{
t + q \mid \, t \in \re, \, q \in \re[x]_m^{hom}, \,
t + q \geq 0 \, \mbox{ on } S
\}.
\ee
So, the dual optimization problem of \reff{miny0:Rm(S):odd} is
\be \label{max<p,A>:1-p>=0}
\max \limits_{p \in \re[x]_m^{hom} }  \quad
\langle p, \mbf{a} \rangle  \quad
s.t. \quad  1 - p  \in \mathscr{P}(S)_{0,m}.
\ee

\begin{lemma} \label{nng:opval:odm}
Let $\mbf{a}$ be the vector as in \reff{a=A:af}.
Then, both \reff{miny0:Rm(S):odd} and \reff{max<p,A>:1-p>=0}
achieve the same optimal value, which equals the nuclear norm
$\| \mA \|_{\ast,\re}$.
\end{lemma}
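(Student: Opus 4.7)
My plan is to prove the equality in three stages: identify the primal value of \reff{miny0:Rm(S):odd} with $\|\mA\|_{\ast,\re}$, establish weak duality between \reff{miny0:Rm(S):odd} and \reff{max<p,A>:1-p>=0}, and then obtain strong duality by invoking the duality of the spectral and nuclear norms to produce a dual-optimal polynomial.

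First, the identification of the primal value with $\|\mA\|_{\ast,\re}$ is immediate from the machinery already set up: every $y \in \mathscr{R}_{\{0,m\}}$ admits a representing measure $\mu \in \mathscr{B}(S)$ for which $(y)_0 = \mu(S)$ and $\int x^{\otimes m}\,d\mu = \mA$ whenever $y_\alpha = \mbf{a}_\alpha$ for $|\alpha|=m$; conversely, every feasible $\mu$ in \reff{BorOpt:R:oddm} produces such a $y$. So \reff{miny0:Rm(S):odd} and \reff{BorOpt:R:oddm} share the same value, which was already stated to equal $\|\mA\|_{\ast,\re}$ (Tang--Shah \cite{TanSha15}), with a minimizer coming from any nuclear decomposition via $\mu = \sum_i \lambda_i \delta_{v_i}$. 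Weak duality then follows from a single computation: for feasible $y$ (with measure $\mu$) and any $p \in \re[x]_m^{hom}$ with $1-p \in \mathscr{P}(S)_{0,m}$, the homogeneity of $p$ gives $\langle p, y\rangle = \langle p, \mbf{a}\rangle$, hence
\[
(y)_0 - \langle p, \mbf{a}\rangle \,=\, \int_S (1-p)\,d\mu \,\geq\, 0,
\]
so the dual value is at most the primal value $\|\mA\|_{\ast,\re}$.

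For strong duality I will exhibit a dual feasible $p$ achieving $\|\mA\|_{\ast,\re}$. By the Friedland--Lim duality $\|\mA\|_{\ast,\re} = \max\{\mA \bullet \mathcal{Y} : \mathcal{Y}\in \mt{S}^m(\re^n),\, \|\mathcal{Y}\|_{\sig,\re}\leq 1\}$ the maximum is attained by compactness in the finite-dimensional space $\mt{S}^m(\re^n)$; call an optimizer $\mathcal{Y}^*$ and set $p^*(x) := \mathcal{Y}^* \bullet x^{\otimes m} \in \re[x]_m^{hom}$. From \reff{||A||sig:sym}, $\|\mathcal{Y}^*\|_{\sig,\re} = \max_{x\in S}|p^*(x)| \leq 1$; and because $m$ is odd, $p^*(-x) = -p^*(x)$, so $\max_S |p^*| = \max_S p^*$, hence $1 - p^* \geq 0$ on $S$, i.e.\ $1 - p^* \in \mathscr{P}(S)_{0,m}$. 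A direct matching of coefficients using \reff{a=A:af} shows $\mA\bullet \mathcal{Y}^* = \sum_\alpha \binom{m}{\alpha}\mbf{a}_\alpha\mathcal{Y}^*_{\alpha} = \langle p^*, \mbf{a}\rangle$, so $p^*$ is dual feasible with value $\|\mA\|_{\ast,\re}$, closing the inequality chain.

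The one bit of bookkeeping that needs care, and is therefore the main subtlety, is the combinatorial identification between the tensor pairing $\mA\bullet \mathcal{Y}$ and the moment pairing $\langle p_{\mathcal{Y}},\mbf{a}\rangle$: the multinomial factors $\binom{m}{\alpha}$ that arise from collecting indices $(i_1,\ldots,i_m)$ with $x^\alpha = x_{i_1}\cdots x_{i_m}$ must appear symmetrically on both sides, so that the symmetry of $\mA$ and $\mathcal{Y}$ lets one use the single representative $\mbf{a}_\alpha$ as in \reff{a=A:af}. Once this is verified, the three steps combine to yield strong duality and the attainment of both optima, proving the lemma.
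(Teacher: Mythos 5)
Your proof is correct, but it takes a genuinely different route from the paper's. The paper obtains both equality of values and attainment in one stroke from linear conic duality: it observes that $p=0$ is an interior point of the dual feasible region (Slater's condition), cites the conic duality theorem of Ben-Tal--Nemirovski for zero duality gap and primal attainment, and then argues separately that the dual feasible set is compact (because $|p| \leq 1$ on $S$ bounds the coefficients of a degree-$m$ form) to get dual attainment. You instead prove weak duality by a one-line measure-theoretic computation and then close the gap by \emph{constructing} a dual-optimal $p^*$: you take a maximizer $\mathcal{Y}^*$ of the Friedland--Lim dual representation $\|\mA\|_{\ast,\re} = \max\{\mA\bullet\mathcal{Y}: \|\mathcal{Y}\|_{\sig,\re}\leq 1\}$, set $p^*(x) := \mathcal{Y}^*\bullet x^{\otimes m}$, use oddness of $m$ to turn $\max_S|p^*|\leq 1$ into $1-p^*\geq 0$ on $S$, and check $\langle p^*,\mbf{a}\rangle = \mA\bullet\mathcal{Y}^*$ via the multinomial bookkeeping. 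Both approaches are sound. The paper's is shorter and does not depend on the spectral--nuclear duality of Friedland--Lim (which is a nontrivial input, even if logically prior and non-circular here); yours is more explicit and shows exactly what the dual optimizer looks like, which is informative. One small point you should make explicit: the Friedland--Lim formula as stated in the introduction takes the max over \emph{all} tensors $\mathcal{Y}$, not just symmetric ones, and your restriction to $\mathcal{Y}\in\mt{S}^m(\re^n)$ needs a one-line justification --- symmetrizing $\mathcal{Y}$ leaves $\mA\bullet\mathcal{Y}$ unchanged (since $\mA$ is symmetric) and can only decrease $\|\mathcal{Y}\|_{\sig,\re}$ (since $\|\mathbf{sym}(\mathcal{Y})\|_{\sig,\re} = \max_{x\in S}|\mathcal{Y}\bullet x^{\otimes m}| \leq \|\mathcal{Y}\|_{\sig,\re}$), so the max may be taken over symmetric $\mathcal{Y}$ without loss.
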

\begin{proof}
Clearly, $p=0$ (the zero polynomial) is an interior point
of \reff{max<p,A>:1-p>=0}.
By the linear conic duality theory~\cite[\S2.4]{BTN},
\reff{miny0:Rm(S):odd} and \reff{max<p,A>:1-p>=0}
have the same optimal value which is $\| \mA \|_{\ast,\re}$,
and \reff{miny0:Rm(S):odd} achieves it.
The feasible set of \reff{max<p,A>:1-p>=0} is compact.
This is because $|p| \leq 1$ on the unit sphere
and $p$ is a form of degree $m$.
So, \reff{max<p,A>:1-p>=0} also achieves its optimal value,
which equals $\| \mA \|_{\ast,\re}$.
\end{proof}

Denote the nonnegative polynomial cones:
\be \label{Qk:oddm}
Q_k \, := \, \mbox{Ideal}_{2k}(1-\|x\|^2) + \Sig[x]_{2k}, \quad
Q := \bigcup_{k\geq 1} Q_k.
\ee
The cones $Q_k$ and $\mathscr{S}^{2k}$
are dual to each other (cf.~\cite{LMOPT}).
So, the dual optimization problem of \reff{min(y)0:mom:k} is
\be \label{m<pf>:Qk:oddm}
\max \limits_{p \in \re[x]_m^{hom} } \quad
 \langle p, \mbf{a} \rangle  \quad s.t. \quad 1 - p  \in Q_k.
\ee
Some properties of Lasserre relaxations
were mentioned in \cite{TanSha15}. For completeness of the paper,
we give the properties with more details and rigorous proofs.

\begin{lemma} \label{ach:opv:k:om}
Let $\mbf{a}$ be the vector as in \reff{a=A:af}.
Then, both \reff{min(y)0:mom:k} and \reff{m<pf>:Qk:oddm}
achieve the same optimal value, which equals
$\| \mA \|_{k\ast,\re}$.
Moreover, for each $k\geq m_0$,
$\| \mA \|_{k\ast,\re}$ is a norm function in
$\mA \in \mt{S}^m(\re^n)$.
\end{lemma}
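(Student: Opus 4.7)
The plan is to mirror Lemma \ref{nng:opval:odm}: apply linear conic duality (with Slater's condition verified on the dual) to obtain strong duality together with primal attainment, and then establish dual attainment from the compactness of the dual feasible set. The norm axioms are then checked on the relaxed feasible sets, with definiteness being the main technical point.

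Problems \reff{min(y)0:mom:k} and \reff{m<pf>:Qk:oddm} form a linear conic primal--dual pair with mutually dual cones $\mathscr{S}^{2k}$ and $Q_k$. As in Lemma \ref{nng:opval:odm}, $p=0$ is interior to the dual constraint $1-p \in Q_k$ (since $1$ lies in the interior of $Q_k$), so by the linear conic duality theorem the two problems share a common optimal value, which is $\| \mA \|_{k\ast,\re}$ by definition. Primal attainment is guaranteed by the coercivity coming from the localizing matrix: the identity $z_{2\beta}=\sum_i z_{2(\beta+e_i)}$ extracted from $L^{(k)}_{1-\|x\|^2}(z)=0$, together with $z_{2(\beta+e_i)}\ge 0$, gives $z_{2\gamma}\leq z_0$ for all $|\gamma|\le k$; Cauchy--Schwarz on $M_k(z)$ then bounds every entry of $z$ in terms of $z_0$, so the sublevel set $\{z_0\leq C\}$ of the feasible set is compact. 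Dual attainment also follows from compactness: $1-p\in Q_k$ yields $p\le 1$ on $S$, and invariance of $Q_k$ under $x\mapsto -x$ combined with $p(-x)=-p(x)$ (since $m$ is odd) yields $p\ge -1$ on $S$, so the dual feasible set is a closed, bounded subset of $\re[x]_m^{hom}$.

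For the norm axioms, nonnegativity is immediate since $z_0=M_k(z)_{00}\ge 0$. Positive homogeneity holds because scaling $z$ by $c>0$ scales the objective by $c$ while preserving feasibility, and the sign-flip $z_\af\mapsto (-1)^{|\af|}z_\af$ (which preserves $M_k(z)$ and $L^{(k)}_{1-\|x\|^2}(z)$, as $x\mapsto -x$ leaves $1-\|x\|^2$ invariant) exchanges feasible points for $\mA$ and $-\mA$ with equal objective, using that $m$ is odd. Subadditivity follows from: if $z^A, z^B$ are optimal feasible for $\mA,\mB$, then $z^A+z^B$ is feasible for $\mA+\mB$ with objective $(z^A)_0+(z^B)_0$.

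The main obstacle is definiteness. Suppose $z$ is feasible with $z_0=0$. Since $M_k(z)\succeq 0$ has a zero $(0,0)$-entry, its entire first row and column vanish, giving $z_\af=0$ for $|\af|\le k$. The localizing identity $z_{2\beta}=\sum_{i=1}^n z_{2(\beta+e_i)}$ (valid for $|\beta|\le k-1$), together with nonnegativity of each $z_{2(\beta+e_i)}=M_k(z)_{\beta+e_i,\beta+e_i}$, lets an induction on $|\beta|$ propagate vanishing of the diagonal: $z_{2\beta}=0$ for all $|\beta|\le k$. With PSD $M_k(z)$ having zero diagonal, $M_k(z)=0$ and hence $z_\af=0$ for every $|\af|\le 2k$; in particular $\mbf{a}_\af=z_\af=0$ for $|\af|=m$, i.e., $\mA=0$.
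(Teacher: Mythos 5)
Your argument follows the same linear-conic-duality route as the paper and reaches the right conclusions; in several places you supply detail the paper only asserts. In particular, your definiteness argument --- propagating $z_0=0$ along the localizing identity $z_{2\beta}=\sum_i z_{2(\beta+e_i)}$ to force the diagonal of $M_k(z)$ to vanish, and then concluding $M_k(z)=0$ --- is exactly the reasoning the paper compresses into a single assertion, and your coercivity argument for primal attainment and the direct "add two feasible points" argument for subadditivity are clean elementary substitutes for the paper's citations of \cite[\S2.4]{BTN} and \cite[\S3.2.5]{BVbook}.

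The one genuine gap is in dual attainment: you assert that the feasible set $\{p\in\re[x]_m^{hom}:1-p\in Q_k\}$ is closed, but this is not automatic. Truncated quadratic modules $\Sig[x]_{2k}+g\cdot\Sig[x]$ need not be closed cones in general, so some justification is required. The paper invokes Marshall's theorem (\cite[Theorem~3.1]{Marsh03}, see also \cite[Theorem~3.35]{Laurent}), which applies here because $1-\|x\|^2$ generates the real vanishing ideal of $S$ and $S$ is compact; without such an argument, "closed and bounded" cannot be claimed and the supremum in \reff{m<pf>:Qk:oddm} might not be attained. Everything else in your write-up is sound, including the $|p|\leq 1$ bound via oddness of $m$.
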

\begin{proof}
For each $k \geq m_0$, $p=0$ is an interior point of \reff{m<pf>:Qk:oddm}.
So, \reff{min(y)0:mom:k} and \reff{m<pf>:Qk:oddm}
have the same optimal value $\| \mA \|_{k\ast,\re} $,
and \reff{min(y)0:mom:k} achieves it (cf.~\cite[\S2.4]{BTN}).
The set $Q_k$ is closed, which can be implied by
Theorem~3.1 of \cite{Marsh03}(
aslo see Theorem~3.35 of \cite{Laurent}).
When $1-p \in Q_k$, $|p| \leq 1$ on the unit sphere $S$.
Hence, the feasible set of \reff{m<pf>:Qk:oddm} is compact,
and it also achieves its optimal value.
In the following, we prove that
$\| \mA \|_{k\ast,\re}$ is a norm function in $\mA$.
\bit
\item [1)] Because $M_k(z) \succeq 0$, $(z)_0 \geq 0$.
So $\| \mA \|_{k\ast,\re} \geq 0$ for all $\mA$.

\item [2)] Let $z^*$ be an optimizer such that
$\| \mA \|_{k\ast,\re} = (z^*)_0$.
If $\| \mA \|_{k\ast,\re} = 0$, then $(z^*)_0=0$
and $z^*=0$, because $M_k(z^*)\succeq 0$ and
$L_{1-\|x\|^2}^{(k)}(z^*)=0$.
So, $\mbf{a}=0$ and $\mA$ must be the zero tensor.

\item [3)] First, we show that
$\| -\mA \|_{k\ast,\re} = \| \mA \|_{k\ast,\re}$.
For $z \in \re^{ \N^n_{[0,2k]}}$,
define $s(z) \in \re^{ \N^n_{[0,2k]}}$ be such that
\[
 ( s(z) )_\af =  (-1)^{|\af|} (z)_\af, \quad
 \forall \, \af \in \N^n_{[0,2k]}.
\]
One can verify that ($\mbf{1}$ denotes the vector of all ones)
\[
M_k( s(z) ) =  \mbox{diag}( [-\mbf{1}]_k ) M_k( z )
\mbox{diag}( [-\mbf{1}]_k ),
\]
\[
L_{1-\|x\|^2}^{(k)}( s(z) ) =
\mbox{diag}( [-\mbf{1}]_{k-1} ) L_{1-\|x\|^2}^{(k)}( z )
\mbox{diag}( [-\mbf{1}]_{k-1}  ).
\]
Thus, $z$ is feasible for \reff{min(y)0:mom:k} with tensor $\mA$
if and only if $s(z)$ is feasible
for \reff{min(y)0:mom:k} with tensor $-\mA$.
Since $(z)_0 = (s(z))_0$, we get
$\| -\mA \|_{k\ast,\re} = \| \mA \|_{k\ast,\re}$.

Second, we show that
$\| t\mA \|_{k\ast,\re} = t \| \mA \|_{k\ast,\re}$ for all $t>0$.
For $t>0$, $z$ is feasible
for \reff{min(y)0:mom:k} with tensor $\mA$
if and only if $tz$ is feasible
for \reff{min(y)0:mom:k} with tensor $t\mA$.
Note that $t (z)_0 = (tz)_0$ for $t>0$.
So, $\| t\mA \|_{k\ast,\re} = t \| \mA \|_{k\ast,\re}$ for $t>0$.

The above two cases imply that
\[
\| t \mA \|_{k\ast,\re} =|t| \cdot \| \mA \|_{k\ast,\re} \, \quad
\forall \, \mA \in \mt{S}^m( \re^n), \, \, \forall \, t \in \re.
\]

\item [4)] The feasible set of \reff{min(y)0:mom:k}
is a convex set in $(z,\mA)$. Its objective is a linear function in $z$.
By the result in \cite[\S3.2.5]{BVbook},
$\| \mA \|_{k\ast,\re}$ is a convex function in $\mA$, so
$
\| \mA + \mB \|_{k\ast,\re} \leq \| \mA \|_{k\ast,\re} +
\| \mB \|_{k\ast,\re}
$
for all $\mA,\mB$.

\eit
\end{proof}

The convergence of Algorithm~\ref{alg:R:odd} is summarized as follows.

\begin{theorem} \label{thm:cvg:oddm}
Let $\| \mA \|_{k\ast,\re}$ be the optimal value of \reff{min(y)0:mom:k}.
For all $\mA \in \mt{S}^m(\re^n)$,
Algorithm~\ref{alg:R:odd} has the following properties:
\bit
\item [(i)]
$\lim\limits_{k\to\infty} \| \mA \|_{k\ast,\re}  = \| \mA \|_{\ast,\re}$.

\item [(ii)] Let $p^*$ be an optimizer of \reff{max<p,A>:1-p>=0}.
If $1-p^* \in Q$, then
$\| \mA \|_{k\ast,\re} = \| \mA \|_{\ast,\re}$ for all $k$ sufficiently big.

\item [(iii)] If $y^k \in \mathscr{R}_{ \{0,m\} }$ for some order $k$,
then $\| \mA \|_{\ast,\re}  = \| \mA \|_{k\ast,\re}$.

\item [(iv)] The sequence $\{ y^k \}_{k=m_0}^{\infty}$
converges to a point in $\mathscr{R}_{ \{0,m\} }$.

\eit
\end{theorem}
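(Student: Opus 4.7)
The plan is to handle the four parts in turn, with part (i) doing the real work. For (i), the monotonicity chain \reff{mrel:rhok:om} immediately gives $\limsup_k \| \mA \|_{k\ast,\re} \leq \| \mA \|_{\ast,\re}$. For the matching lower bound I would pass to the dual formulations provided by Lemmas \ref{nng:opval:odm} and \ref{ach:opv:k:om}. Let $p^*$ be an optimizer of \reff{max<p,A>:1-p>=0}, which exists by Lemma \ref{nng:opval:odm}. For any $\varepsilon \in (0,1)$ the polynomial $1 - (1-\varepsilon) p^*$ satisfies $1 - (1-\varepsilon) p^* \geq \varepsilon > 0$ on the unit sphere $S$. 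The cone $Q = \mbox{Ideal}(1-\|x\|^2)+\Sig[x]$ is archimedean, since for any $N \geq 1$ one has $N - \|x\|^2 = (N-1) + (1-\|x\|^2) \in Q$. Putinar's Positivstellensatz (Theorem \ref{thm:Put}) then yields $1 - (1-\varepsilon) p^* \in Q_{k_\varepsilon}$ for some $k_\varepsilon$, so $(1-\varepsilon) p^*$ is feasible for \reff{m<pf>:Qk:oddm} at every order $k \geq k_\varepsilon$. This gives $\| \mA \|_{k\ast,\re} \geq (1-\varepsilon)\langle p^*, \mbf{a}\rangle = (1-\varepsilon) \| \mA \|_{\ast,\re}$; sending $\varepsilon \to 0$ finishes (i).

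Parts (ii) and (iii) are then short. For (ii), $1 - p^* \in Q = \bigcup_k Q_k$ implies $1-p^* \in Q_{k_0}$ for some $k_0$, so $p^*$ itself is feasible for \reff{m<pf>:Qk:oddm} at every $k \geq k_0$, giving $\| \mA \|_{k\ast,\re} \geq \langle p^*, \mbf{a}\rangle = \| \mA \|_{\ast,\re}$; combined with \reff{mrel:rhok:om} this forces equality. For (iii), if $y^k \in \mathscr{R}_{\{0,m\}}$, then by definition there is $\mu \in \mathscr{B}(S)$ with $(y^k)_\alpha = \int x^\alpha \mt{d}\mu$ for every $\alpha \in \N^n_{\{0,m\}}$; the equality constraints of \reff{min(y)0:mom:k} give $\mA = \int x^{\otimes m}\,\mt{d}\mu$, so $\mu$ is feasible for \reff{BorOpt:R:oddm} with value $\int 1\,\mt{d}\mu = (y^k)_0 = \| \mA \|_{k\ast,\re}$. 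Hence $\| \mA \|_{\ast,\re} \leq \| \mA \|_{k\ast,\re}$, and the reverse inequality is \reff{mrel:rhok:om}.

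Part (iv) is essentially a dimension-counting remark: $y^k$ lies in the fixed finite-dimensional space $\re^{\N^n_{\{0,m\}}}$, whose only indices are $\alpha = 0$ and those $\alpha$ with $|\alpha|=m$. On the latter the equality constraints of \reff{min(y)0:mom:k} pin $(y^k)_\alpha = \mbf{a}_\alpha$, and the only remaining coordinate is $(y^k)_0 = \| \mA \|_{k\ast,\re}$, which converges to $\| \mA \|_{\ast,\re}$ by (i). Thus $y^k$ converges to the tms $y^*$ whose $0$-entry is $\| \mA \|_{\ast,\re}$ and whose $\alpha$-entries for $|\alpha|=m$ agree with $\mbf{a}$. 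By Lemma \ref{nng:opval:odm}, problem \reff{miny0:Rm(S):odd} attains its optimum, and any such optimal tms lies in $\mathscr{R}_{\{0,m\}}$, so $y^* \in \mathscr{R}_{\{0,m\}}$. The only genuinely delicate step throughout is the strict-positivity perturbation in (i) that enables Putinar to apply; once (i) is in hand, the remaining parts are bookkeeping.
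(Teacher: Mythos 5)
Your proof is correct, and parts (i)--(iii) follow essentially the same route as the paper: pass to the dual via Lemmas \ref{nng:opval:odm} and \ref{ach:opv:k:om}, apply Putinar's Positivstellensatz to a strictly positive near-optimal dual certificate, and close with the monotonicity chain \reff{mrel:rhok:om}. The one place you are more explicit is the strict-positivity perturbation in (i): the paper simply asserts the existence of a near-optimal $p_1$ with $1-p_1>0$ on $S$, whereas you produce it by scaling the optimizer $p^*$ to $(1-\varepsilon)p^*$. This is a valid instantiation of the same idea.

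The genuinely different step is part (iv). The paper shows $y^* \in \mathscr{R}_{\{0,m\}}$ by invoking \reff{SDr:R0,m:odd} (equivalently, the result that the Hausdorff distance between $\mathscr{S}^{2k}_{\{0,m\}}$ and $\mathscr{R}_{\{0,m\}}$ tends to zero, citing Prop.~3.4 of \cite{LMOPT}). You instead note that $\N^n_{\{0,m\}} = \{0\}\cup\N^n_{\{m\}}$, that the coordinates of $y^k$ indexed by $|\af|=m$ are pinned to $\mbf{a}$, and that the lone free coordinate $(y^k)_0$ converges to $\|\mA\|_{\ast,\re}$ by (i); the limit $y^*$ is thus completely determined, and it coincides with the (attained, by Lemma \ref{nng:opval:odm}) minimizer of \reff{miny0:Rm(S):odd}, which lies in $\mathscr{R}_{\{0,m\}}$ by construction. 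This is a more elementary and self-contained argument than the paper's, as it avoids the external reference to \cite{LMOPT}; it exploits the fact that $\re^{\N^n_{\{0,m\}}}$ has so few free coordinates, a feature specific to this setting. (For parity of rigor, you might add one sentence making explicit that the optimizer $\hat y$ of \reff{miny0:Rm(S):odd} must equal $y^*$ because they agree coordinate by coordinate.) Both approaches are correct; the paper's is the one that generalizes verbatim to the even-order and complex cases, where the limit point is not uniquely pinned down, while yours is cleaner for the odd-order real case at hand.
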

\begin{proof}
(i) By Lemma~\ref{nng:opval:odm},
for every $\eps>0$, there exists $p_1 \in \re[x]_m^{hom}$ such that
\[
1 - p_1 >0  \mbox{ on }  S, \qquad
\langle p_1, \mbf{a} \rangle \geq \| \A \|_{\ast,\re} - \eps.
\]
By Theorem~\ref{thm:Put}, there exists $k_1$ such that
$
 1 - p_1  \in Q_{k_1}.
$
By Lemma~\ref{ach:opv:k:om}, we get
\[
\| \mA \|_{k_1\ast,\re}    \geq  \| \A \|_{\ast,\re} - \eps.
\]
The monotonicity relation \reff{mrel:rhok:om} and the above imply that
\[
\| \mA \|_{\ast,\re} \geq \lim\limits_{k\to\infty} \| \mA \|_{k\ast,\re}
\geq \| \mA \|_{\ast,\re} - \eps.
\]
Since $\eps>0$ can be arbitrarily small,
the item (i) follows directly.

(ii) If $1-p^* \in Q$, then
$1-p^* \in Q_{k_2}$ for some $k_2$.
By Lemma~\ref{nng:opval:odm}, we know
\[
\| \mA \|_{\ast,\re} = \langle p^*, \mbf{a} \rangle
\leq \| \mA \|_{k_2\ast,\re} .
\]
Then, \reff{mrel:rhok:om} implies that
$\| \mA \|_{k\ast,\re} = \| \mA \|_{\ast,\re}$
for all $k \geq k_2$.

(iii) If $y^k \in \mathscr{R}_{ \{0,m\} }$ for some $k$,
then $\| \mA \|_{k\ast,\re}  \geq \| \mA \|_{\ast,\re}$,
by Lemmas~\ref{nng:opval:odm} and \ref{ach:opv:k:om}.
Then, the equality $\| \mA \|_{k\ast,\re} = \| \mA \|_{\ast,\re}$
follows from \reff{mrel:rhok:om}.

(iv) Note the relations
\[
(y^k)_0 = \| \mA \|_{k\ast,\re}, \quad
(y^k)_\af  = \mbf{a}_{ \af } \quad
(\forall \, \af \in \N^n_{ \{ m \} } ).
\]
Since $\| \mA \|_{k\ast,\re}  \to  \| \mA \|_{\ast,\re}$,
we know the limit $y^*$ of the sequence $\{ y^k \}$ must exist.
For all $k\geq m/2$, we have
$y^k \in \mathscr{S}^{2k}_{ \{0,m\} }$.
The distance between $\mathscr{S}^{2k}_{ \{0,m\} }$ and
$\mathscr{R}_{ \{0,m\} }$ tends to zero as $k\to \infty$
(cf.~\cite[Prop.~3.4]{LMOPT}),
so $y^* \in  \mathscr{R}_{ \{0,m\} }$.
It can also be implied by the equality \reff{SDr:R0,m:odd}.
\end{proof}

In Theorem~\ref{thm:cvg:oddm}(ii), we always have $1-p^*\geq 0$ on $S$.
Under some general conditions,
we further have $1-p^* \in Q$, as shown in \cite{opcd}.
Thus, Algorithm~\ref{alg:R:odd} usually has finite convergence,
which is confirmed by numerical experiments in \S\ref{sc:num}.

\section{Even order tensors with $\F = \re$}
\label{sc:Real:meven}

Assume the order $m$ is even and the field $\F = \re$.
For a symmetric tensor $\mA \in \mt{S}^m(\re^n)$,
the sign of $\lmd_i$ in \reff{ast||A||:sym}
cannot be generally assumed to be positive.
However, we can always decompose $\mA$ as
($\mbf{1}$ is the vector of all ones)
\be \label{dcF:v+v-:lmd>=0}
\left\{ \baray{c}
\mA = \sum_{i=1}^{r_1}  \lmd_i^+ (v_i^+)^{\otimes m} -
\sum_{i=1}^{r_2}  \lmd_i^- (v_i^-)^{\otimes m}, \\
\lmd_i^+ \geq 0, \, \| v_i^+ \|  = 1, \,
\mathbf{1}^Tv_i^+ \geq 0, \, v_i^+ \in \re^n,\\
\lmd_i^- \geq 0, \, \| v_i^- \| = 1, \,
\mathbf{1}^Tv_i^- \geq 0, \, v_i^- \in \re^n.
\earay \right.
\ee
Let $\mathscr{B}(S^+)$ be the set of Borel measures
supported in the half unit sphere
\be \label{set:S+}
S^+ := \{ x\in \re^n \, \mid \, \|x\|=1, \mathbf{1}^Tx \geq 0 \}.
\ee
Clearly, the weighted Dirac measures
\[
\mu^+ := \sum_{i=1}^{r_1}  \lmd_i^+ \dt_{v_i^+}, \quad
\mu^- := \sum_{i=1}^{r_2}  \lmd_i^- \dt_{v_i^-}
\]
belong to $\mathscr{B}(S^+)$.
%
%
The decomposition \reff{dcF:v+v-:lmd>=0} is equivalent to
\be \label{dcpA:mu+mu-}
\mA = \int  x^{\otimes m} \mt{d} \mu^+
 - \int  x^{\otimes m} \mt{d} \mu^-.
\ee
Reversely, if there exist $\mu^+, \mu^- \in \mathscr{B}(S^+)$
satisfying \reff{dcpA:mu+mu-}, then $\mA$ has a decomposition
as in \reff{dcF:v+v-:lmd>=0} (cf.~\cite[Prop.~3.3]{ATKMP}).
Therefore, the nuclear norm
$\| \mA \|_{\ast,\re}$ equals the optimal value of the problem
\be \label{nnF:opt:mu+-}
\left\{\baray{rl}
\min  & \int 1 \mt{d} \mu^+ +  \int 1 \mt{d} \mu^-\\
s.t. &  \mA = \int  x^{\otimes m} \mt{d} \mu^+
- \int  x^{\otimes m} \mt{d} \mu^-, \\
&  \mu^+, \mu^- \in \mathscr{B}(S^+).
\earay\right.
\ee
Let $\mbf{a} \in \re^{ \N^n_{ \{m\} } }$ be the vector such that
\be \label{mbf(a):evm}
 \mbf{a}_\af  \, = \, \mA_{i_1\cdots i_m}
 \quad \mbox{ if } \quad  x^\af = x_{i_1}\cdots x_{i_m}.
\ee
Denote the cone of moments
\be \label{scr(R)+:0+m}
\mathscr{R}^+_{ \{0,m\} } := \left\{
y \in \re^{ \N^n_{ \{0, m\} } }
\left| \baray{c}
 \exists \mu \in \mathscr{B}(S^+) \, \mbox{ such that } \\
 y_\af = \int x^\af \mt{d} \mu \, \mbox{ for } \, \af \in \N^n_{ \{0, m\} }
\earay \right.
\right\}.
\ee
Then, \reff{nnF:opt:mu+-} is equivalent to
\be \label{miny0:Rm(S)}
\left\{ \baray{rl}
\min   &  (y^+)_0 + (y^-)_0 \\
s.t. &   (y^+)_\af - (y^-)_\af  = \mbf{a}_\af \,\, ( \af \in \N^n_{\{m\} } ), \\
 & y^+, y^- \in \mathscr{R}^+_{ \{0,m\} }.
\earay \right.
\ee

\subsection{An algorithm}

The cone $\mathscr{R}^+_{ \{0,m\} }$ can be approximated
by semidefinite relaxations.
%
%
Denote the cones
\begin{align}
\mathscr{S}^{+,2k} & := \left\{
z \in  \left. \re^{ \N^n_{ [0, 2k] } }  \right|
M_k(z) \succeq 0, \, L^{(k)}_{\mbf{1}^Tx}(z) \succeq 0,
\, L^{(k)}_{1-\|x\|^2}(z)=0
\right\}, \\
\mathscr{S}^{+,2k}_{ \{0,m\} } & := \left\{
y \in  \left.  \re^{ \N^n_{ \{0, m\} } }  \right|
\exists \, z \in  \mathscr{S}^{+,2k}, \,  \, y = z|_{ \{0,m \} }
\right\}.
\end{align}
Note that $\mathscr{S}^{+,2k}_{ \{0,m\} }$ is a projection of
$\mathscr{S}^{+,2k}$ and
$
\mathscr{R}^{+}_{ \{0,m\} } \subseteq \mathscr{S}^{+,2k}_{ \{0,m\} }
$
for all $k$. As shown in \cite{LMOPT}, it holds that
\be \label{SDr:R0,m:S+}
\mathscr{R}^+_{ \{0,m\} } = \bigcap_{k \geq m/2 }
\mathscr{S}^{+,2k}_{ \{0,m\} }.
\ee
So, we get the hierarchy of semidefinite relaxations
for solving \reff{miny0:Rm(S)}:
\be \label{rho(k):m:even}
\left\{ \baray{rl}
\| \mA \|_{k\ast,\re}  \, := \, \min\limits_{z^+,  z^-} &  (z^+)_0 + (z^-)_0 \\
s.t. &   (z^+)_\af - (z^-)_\af  = \mbf{a}_\af \, ( \af \in \N^n_{\{m\} } ), \\
 &  z^+, z^- \in \mathscr{S}^{+,2k},
\earay \right.
\ee
for $k= m_0, m_0+1, \ldots$ ($m_0 = \lceil m/2 \rceil$).
Similar to \reff{mrel:rhok:om},
we also have the monotonicity relationship
\be  \label{rhok:mcr:evm}
\| \mA \|_{m_0\ast,\re} \leq \cdots
\leq \| \mA \|_{k\ast,\re} \leq \cdots \leq \| \mA \|_{\ast,\re}.
\ee

\begin{alg} \label{alg:even:m}
For a given tensor $\mA \in \mt{S}^m(\re^n)$,
let $k = m_0$ and do:
\bit

\item [Step 1] Solve the semidefinite relaxation \reff{rho(k):m:even},
for an optimizer $(z^{+,k}, z^{-,k})$.

\item [Step 2]  Let $y^{+,k} := z^{+,k}\big|_{ \{0,m\} }$,
$y^{-,k} := z^{-,k}\big|_{ \{0,m\} }$
(see \reff{trun:z|0,m} for the truncation).
Check whether $y^{+,k}, y^{-,k} \in \mathscr{R}^+_{ \{0,m\} }$ or not.
If they both belong, then
$\| \mA \|_{\ast, \re} = \| \mA \|_{k\ast,\re}$ and go to Step~3;
otherwise, let $k :=k+1$ and go to Step~1.

\item [Step 3] Compute the decompositions of
$y^{+,k}, y^{-,k}$ as
\[
y^{+,k} = \sum_{i=1}^{r_1} \lmd_i^+ [v_i^+]_{0,m}, \quad
y^{-,k} = \sum_{i=1}^{r_2} \lmd_i^- [v_i^-]_{0,m},
\]
with all $\lmd^+_i >0, \lmd_i^->0$ and $v_i^+, v_i^- \in S^+$.
The above gives the nuclear decomposition:
\[
\mA = \sum_{i=1}^{r_1} \lmd_i^+ (v_i^+)^{\otimes m} -
\sum_{i=1}^{r_2} \lmd_i^-  (v_i^-)^{\otimes m}
\]
such that
$\sum_{i=1}^{r_1} \lmd_i^+ + \sum_{i=1}^{r_2} \lmd_i^- = \| \mA \|_{\ast,\re}$.

\eit

\end{alg}

In the above, the method in \cite{ATKMP}
can be applied to check if
$y^{+,k}, y^{-,k} \in \mathscr{R}^+_{ \{0,m\} }$ or not.
If yes, a nuclear decomposition can also be obtained.
In Step~3, it is possible that $r_1=0$ or $r_2 = 0$,
for which case the corresponding $y^{+,k}$ or $y^{-,k}$
is the vector of all zeros.
Note that Algorithm~\ref{alg:even:m} can also be applied
to compute $\| \mA \|_{\ast,\re}$
even if the order $m$ is odd.

\subsection{Convergence properties}

The dual cone of the set $\mathscr{R}^+_{ \{0,m\} }$ is
\[
\mathscr{P}(S^+)_{0,m}  \,:= \, \{ t + p \mid
t \in \re, \, p \in \re[x]_m^{hom}, \, t + p \geq 0  \mbox{ on } S^+  \}.
\]
So, the dual optimization problem of \reff{miny0:Rm(S)} is
\be \label{max<p,f>:1>=|p|}
 \max \limits_{ p \in  \re[x]_m^{hom} } \quad  \langle p, \mbf{a} \rangle
\quad s.t. \quad 1 \pm p \in \mathscr{P}(S^+)_{0,m}.
\ee

\begin{lemma} \label{mnng:val=:evm}
Let $\mbf{a}$ be the vector as in \reff{mbf(a):evm}.
Then, both \reff{miny0:Rm(S)} and \reff{max<p,f>:1>=|p|}
achieve the same optimal value which equals $\| \mA \|_{\ast, \re}$.
\end{lemma}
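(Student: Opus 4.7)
The plan is to follow the same template as Lemma~\ref{nng:opval:odm}, applying linear conic duality between (\ref{miny0:Rm(S)}) and (\ref{max<p,f>:1>=|p|}), with primal attainment coming from the measure reformulation and dual attainment coming from a compactness argument adapted to $S^+$.

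First, I would verify that the optimal value of (\ref{miny0:Rm(S)}) is exactly $\|\mA\|_{\ast,\re}$ and is attained. One direction is immediate: any decomposition of $\mA$ as in (\ref{dcF:v+v-:lmd>=0}) gives the Dirac-type measures $\mu^+=\sum_i \lmd_i^+\dt_{v_i^+}$ and $\mu^-=\sum_i\lmd_i^-\dt_{v_i^-}$ in $\mathscr{B}(S^+)$ realizing (\ref{dcpA:mu+mu-}), so $(y^+, y^-):=([\int x^\af d\mu^+]_\af, [\int x^\af d\mu^-]_\af)$ is feasible in (\ref{miny0:Rm(S)}) with objective $\sum \lmd_i^+ + \sum \lmd_i^-$. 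Conversely, any feasible $(y^+, y^-)$ comes from some $(\mu^+, \mu^-) \in \mathscr{B}(S^+)^2$, and Proposition~3.3 of \cite{ATKMP} yields finite atomic measures with the same total mass supported in $S^+$, which give a decomposition as in (\ref{dcF:v+v-:lmd>=0}).

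Second, I would invoke linear conic duality (cf.~\cite[\S2.4]{BTN}) between (\ref{miny0:Rm(S)}) and (\ref{max<p,f>:1>=|p|}). The cone $\mathscr{R}^+_{\{0,m\}}$ is closed and convex with nonempty interior (an analogue of \cite[Prop.~3.2]{LMOPT} for $S^+$). The zero polynomial $p=0$ is a Slater point of (\ref{max<p,f>:1>=|p|}): small perturbations $p'$ still satisfy $1\pm p'>0$ on the compact set $S^+$, so $1\pm p' \in \mathscr{P}(S^+)_{0,m}$. This Slater condition on the dual together with primal attainment gives strong duality and common optimal value $\|\mA\|_{\ast,\re}$.

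Finally, I would show that (\ref{max<p,f>:1>=|p|}) is itself attained by proving its feasible set is compact. Closedness is clear since $\mathscr{P}(S^+)_{0,m}$ is closed. For boundedness, feasibility implies $|p|\leq 1$ on $S^+$; because $m$ is even, $p$ is an even form, so $p(-x)=p(x)$, and since every $x\in S$ satisfies either $x\in S^+$ or $-x\in S^+$, we get $|p|\leq 1$ on all of $S$. This uniformly bounds the coefficients of $p$, so the feasible set is compact and the maximum is attained. The only subtlety, which is the main point that distinguishes this from the odd case, is the use of evenness of $m$ to propagate the pointwise bound from the half-sphere $S^+$ to the full sphere $S$; without it, the coefficient bound would fail and dual attainment would require a different argument.
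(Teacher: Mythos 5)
Your proof is correct, but it takes a genuinely different route from the paper's. The paper's proof of this lemma does not use compactness of the dual feasible set at all; instead, it exploits a structural feature of \reff{miny0:Rm(S)} to produce an interior (Slater) point of the \emph{primal}: starting from any feasible pair $(\hat{y}^+,\hat{y}^-)$ and any interior point $\xi$ of $\mathscr{R}^+_{\{0,m\}}$, the pair $(\hat{y}^+ + \xi,\, \hat{y}^- + \xi)$ is still feasible because the constraint involves only the difference $(y^+)_\af - (y^-)_\af$, and both components are now interior to the cone. Combined with the interior point $p=0$ of the dual, Slater holds on both sides, and conic duality immediately delivers strong duality with attainment on both sides. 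Your argument instead mirrors the structure of Lemma~\ref{nng:opval:odm}: dual Slater point gives primal attainment, and compactness of the dual feasible set gives dual attainment. Both work; the paper's version is slightly shorter and avoids any discussion of coefficient bounds, while yours makes explicit the observation (relevant to the even case) that the sup-bound on $S^+$ propagates to $S$.

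One small correction to your final remark: the claim that ``without the evenness of $m$, the coefficient bound would fail'' is not right. The half-sphere $S^+$ contains a relatively open subset of $S$, so a homogeneous form of degree $m$ vanishing on $S^+$ vanishes identically; hence the restriction map from degree-$m$ forms to $C(S^+)$ is injective, and by finite-dimensionality the sup-norm on $S^+$ is already a norm on the space of forms. Thus $|p|\le 1$ on $S^+$ already bounds the coefficients of $p$, with or without evenness. The evenness observation is a clean shortcut, but it is not what makes the boundedness work.
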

\begin{proof}
The feasible set of \reff{miny0:Rm(S)} is always nonempty,
say, $(\hat{y}^+, \hat{y}^-)$ is a feasible pair.
Let $\xi$ be an interior point of $\mathscr{R}^+_{ \{0,m\} }$.
Then $\hat{y}^+ +\xi, \hat{y}^- + \xi$ are both interior points of
$\mathscr{R}^+_{ \{0,m\} }$.
The zero polynomial $p=0$ is an interior point of \reff{max<p,f>:1>=|p|}.
By the linear conic duality theory \cite[\S2.4]{BTN},
the optimal values of \reff{miny0:Rm(S)} and \reff{max<p,f>:1>=|p|}
are equal, and they both achieve it.
The optimal value of \reff{miny0:Rm(S)}
is $\| \mA \|_{\ast, \re}$,
so it is also the optimal value of \reff{max<p,f>:1>=|p|}.
\end{proof}

Next, we study the properties of the relaxation~\reff{rhok:mcr:evm}.
Denote the cones of nonnegative polynomials:
\be \label{Qk+:evm}
Q_k^+ \, := \, \mbox{Ideal}_{2k}(1-\| x \|^2)
+ \mbox{Qmod}_{2k}(\mathbf{1}^Tx), \quad
Q^+ \, := \, \bigcup_{ k \geq 1 } Q_k^+.
\ee
The cones $Q_k^+$ and $\mathscr{S}^{+,2k}$
are dual to each other (cf.~\cite{LMOPT}),
so the dual optimization problem of \reff{rho(k):m:even} is
\be \label{mx<p,f>:1+-p:Qk}
 \max \limits_{ p \in  \re[x]_m^{hom} }
\quad \langle p, \mbf{a} \rangle \quad
s.t. \quad 1 \pm p  \in Q_k^+.
\ee

\begin{lemma} \label{achval:Qk:evm}
Let $\mbf{a}$ be the vector of entries of $\mA$ as in \reff{mbf(a):evm}.
For each $k \geq m_0$, both \reff{rho(k):m:even} and \reff{mx<p,f>:1+-p:Qk}
achieve the same optimal value $\| \mA \|_{k\ast,\re}$.
Moreover, $\| \mA \|_{k\ast,\re}$ is a norm function in
$\mA \in \mt{S}^m(\re^n)$.
\end{lemma}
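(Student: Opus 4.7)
The plan is to mirror the proof of the odd-order analogue (Lemma~\ref{ach:opv:k:om}), adjusting for the two-piece moment decomposition $(z^+,z^-)$ and for the half-sphere $S^+$ encoded by the extra localizing constraint $L^{(k)}_{\mathbf{1}^Tx}(z)\succeq 0$. Throughout I would use the duality pairing between $Q_k^+$ and $\mathscr{S}^{+,2k}$ recorded just before \reff{mx<p,f>:1+-p:Qk}.

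For the equality of optimal values, I would first check that $p=0$ is an interior point of the feasible set of \reff{mx<p,f>:1+-p:Qk}: since $1\in\Sig[x]_{2k}\subseteq Q_k^+$ and small perturbations of $1$ remain strictly positive on $S^+$, Theorem~\ref{thm:Put} places them in $Q_k^+$ for $k$ large enough (and for the fixed $k\ge m_0$ one uses a direct SOS representation for the small perturbation). Linear conic duality \cite[\S2.4]{BTN} then delivers equality of the optimal values of \reff{rho(k):m:even} and \reff{mx<p,f>:1+-p:Qk}, with the primal attaining the common value $\|\mA\|_{k\ast,\re}$. To see that the dual also attains its value, I would argue compactness of its feasible set: if $1\pm p\in Q_k^+$ then $|p|\le 1$ on $S^+$, and since $m$ is even and $p$ is homogeneous of degree $m$, the identity $p(-x)=p(x)$ extends the bound to the whole sphere $S$, which in turn bounds the coefficient vector of $p$. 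Closedness of $Q_k^+$ (implied by Marshall's theorem, cf.~Theorem~3.35 of \cite{Laurent}) then yields a maximizer.

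The norm properties I would verify in parallel with the odd-order proof. Nonnegativity is immediate from $M_k(z^\pm)\succeq 0$. For positivity, if $\|\mA\|_{k\ast,\re}=0$ then $(z^{+,*})_0=(z^{-,*})_0=0$, and combining the PSD constraint on $M_k(z^\pm)$ (which zeroes the top row/column) with the ideal constraint $L^{(k)}_{1-\|x\|^2}(z^\pm)=0$ propagates the zeros to all degrees $\le 2k$, forcing $z^{\pm,*}=0$, hence $\mathbf{a}=0$ and $\mA=0$. Homogeneity is cleaner than in the odd case: $\|{-\mA}\|_{k\ast,\re}=\|\mA\|_{k\ast,\re}$ by the symmetry $(z^+,z^-)\mapsto(z^-,z^+)$, and $\|t\mA\|_{k\ast,\re}=t\|\mA\|_{k\ast,\re}$ for $t>0$ by the scaling $(z^+,z^-)\mapsto(tz^+,tz^-)$. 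Subadditivity follows from the convexity of the feasible region of \reff{rho(k):m:even} in the triple $(z^+,z^-,\mA)$ together with linearity of the objective, via the standard partial-minimization lemma in \cite[\S3.2.5]{BVbook}.

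The main obstacle I anticipate is the interior-point verification for the dual at fixed relaxation order $k$: Putinar's theorem gives membership in $Q^+=\bigcup_k Q_k^+$ but not a uniform degree, so one must check directly that small perturbations of the constant $1$ already admit a representation in $Q_k^+$. Once this is dispatched (using, e.g., $1-\varepsilon q = 1-\varepsilon q\cdot\|x\|^2$ modulo the ideal $1-\|x\|^2$, then absorbing $-\varepsilon q\|x\|^2$ into the SOS part for small $\varepsilon$), the remainder of the argument transcribes almost verbatim from the odd-order case, with the $\pm$ symmetry simplifying, rather than complicating, the sign-flip step.
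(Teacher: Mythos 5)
Your overall structure matches the paper's: interiority of $p=0$ plus conic duality gives equality of optimal values and primal attainment; closedness of $Q_k^+$ (Marshall) plus compactness of the dual feasible set gives dual attainment; then the four norm axioms. One place where you and the paper diverge, and where you are actually more careful, is the step showing $\|{-\mA}\|_{k\ast,\re}=\|\mA\|_{k\ast,\re}$. The paper transcribes the odd-order device $s(z)_\af=(-1)^{|\af|}z_\af$ from Lemma~\ref{ach:opv:k:om}, but for even $m$ this does not work twice over: since $|\af|=m$ is even the map $s$ fixes the degree-$m$ moments (so $(s(z^+),s(z^-))$ encodes $\mA$, not $-\mA$), and $s$ also flips the sign of $L^{(k)}_{\mathbf{1}^Tx}$, so $s(z)\notin\mathscr{S}^{+,2k}$ in general. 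Your swap $(z^+,z^-)\mapsto(z^-,z^+)$ is exactly what is needed here: it negates $(z^+)_\af-(z^-)_\af$, it preserves the objective $(z^+)_0+(z^-)_0$, and it trivially preserves membership in $\mathscr{S}^{+,2k}\times\mathscr{S}^{+,2k}$. So your version of step 3) is the correct one.

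Two small caveats. First, you flagged that Putinar only gives membership in $Q^+=\bigcup_k Q_k^+$ without a degree bound, and that is a legitimate worry, but the specific repair you sketch (writing $1-\eps q\equiv 1-\eps q\|x\|^2$ mod the ideal and then absorbing into the SOS part) does not go through as stated, because $1-\eps q\|x\|^2$ is unbounded below on $\re^n$ and hence cannot lie in $\Sig[x]_{2k}$. The clean argument is the standard conic one: since $M_k(z)\succeq 0$ and $L^{(k)}_{1-\|x\|^2}(z)=0$ force $z=0$ whenever $z_0=0$, the cone $\mathscr{S}^{+,2k}$ is pointed and $\langle 1,z\rangle=z_0>0$ on $\mathscr{S}^{+,2k}\setminus\{0\}$, so $1$ lies in the interior of the (closed) dual cone $Q_k^+$ at every fixed $k\geq m_0$, and hence $p=0$ is interior to the constraint $1\pm p\in Q_k^+$. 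Second, your observation that $|p|\leq 1$ on $S^+$ extends to $|p|\leq 1$ on all of $S$ because $p(-x)=p(x)$ for even $m$ is exactly right and is the reason the compactness argument works; the paper states the conclusion without this one-line justification. With these two points in place your proof is correct and complete.
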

\begin{proof}
The zero form $p=0$ is an interior point
of \reff{mx<p,f>:1+-p:Qk}, for all $k \geq m_0$.
By the linear conic duality theory \cite[\S2.4]{BTN},
\reff{rho(k):m:even} and \reff{mx<p,f>:1+-p:Qk}
have the same optimal value and
\reff{rho(k):m:even} achieves it.
The vanishing ideal of $S^+$ is $\mbox{Ideal}(1-\|x\|^2)$,
so the set $Q_k^+$ is closed
(cf.~\cite[Theorem~3.35]{Laurent} or \cite[Theorem~3.1]{Marsh03}).
When $p$ is feasible for \reff{mx<p,f>:1+-p:Qk},
$|p| \leq 1$ on the unit sphere $S$.
So, the feasible set of \reff{mx<p,f>:1+-p:Qk}
is compact, and it also achieves its optimal value.
As in the proof of Lemma~\ref{ach:opv:k:om},
we can similarly prove that
$\| \mA \|_{k\ast,\re}$ is a norm function in $\mA$, as follows:
\bit
\item [1)] Because $(z^+)_0 \geq 0$, $(z^-)_0 \geq 0$,
we must have $\| \mA \|_{k\ast,\re} \geq 0$ for all $\mA$.

\item [2)] Let $(z^{+*}, z^{-*})$ be such that
$\| \mA \|_{k\ast,\re} = (z^{+*})_0 + (z^{-*})_0$.
If $\| \mA \|_{k\ast,\re} = 0$, then
$(z^{+*})_0 = (z^{-*})_0=0$, and hence
and $z^{+*}=z^{-*}=0$.
So, $\mA$ must be the zero tensor.

\item [3)] Let $s(z)$ be the function
as in the proof of Lemma~\ref{ach:opv:k:om}.
One can similarly prove that
$(z^+,z^-)$ is feasible for \reff{rho(k):m:even} with tensor $\mA$
if and only if $(s(z^+), s(z^-))$ is feasible
\reff{rho(k):m:even} with tensor $-\mA$.
This implies that $\| -\mA \|_{k\ast,\re} = \| \mA \|_{k\ast,\re}$.
Similarly, one can show that
$\| t\mA \|_{k\ast,\re} = t \| \mA \|_{k\ast,\re}$ for $t>0$.
Therefore,
$\| t \mA \|_{k\ast,\re} =|t| \cdot \| \mA \|_{k\ast,\re}$
for all $\mA$ and for all $t \in \re$.

\item [4)] For all tensors $\mA, \mB$, the triangular inequality
$
\| \mA + \mB \|_{k\ast,\re} \leq \| \mA \|_{k\ast,\re} +
\| \mB \|_{k\ast,\re}
$
follows from the fact that
the feasible set of \reff{rho(k):m:even}
is a convex set in $(z,\mA)$ and its objective is linear in $z$.

\eit

\end{proof}

The convergence properties of Algorithm~\ref{alg:even:m} are as follows.

\begin{theorem} \label{thm:cvg:evm}
Let $\| \mA \|_{k\ast,\re}$ be the optimal value of \reff{rho(k):m:even}.
For all $\mA \in \mt{S}^m(\re^n)$,
Algorithm~\ref{alg:even:m} has the following properties:
\bit
\item [(i)]
$\lim\limits_{k\to\infty} \| \mA \|_{k\ast,\re} = \| \mA \|_{\ast,\re}$.

\item [(ii)] Let $p^*$ be an optimizer of \reff{max<p,f>:1>=|p|}.
If $1 \pm p^* \in Q^+$, then
$\| \mA \|_{k\ast,\re} = \| \mA \|_{\ast,\re}$ for all $k$ sufficiently big.

\item [(iii)] If $y^{+,k}, y^{-,k} \in \mathscr{R}^+_{ \{0,m\} }$ for some order $k$,
then $\| \mA \|_{k\ast,\re} = \| \mA \|_{\ast,\re}$.

\item [(iv)] The sequence
$\{ (y^{+,k},  y^{-,k}) \}_{k=m_0}^\infty$
is bounded, and for its each accumulation point
$(\hat{y}^+, \hat{y}^-)$, we must have
\[
\hat{y}^+, \hat{y}^- \in \mathscr{R}^{+}_{ \{0,m\} }, \quad
( \hat{y}^+)_0 + ( \hat{y}^- )_0 = \| \mA \|_{\ast,\re}.
\]
Moreover, if the nuclear decomposition of $\mA$ over $\re$ is unique,
then $(y^{+,k},  y^{-,k})$ converges to the pair
$(\hat{y}^+, \hat{y}^-)$ as above.
\eit

\end{theorem}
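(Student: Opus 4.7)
The plan is to adapt the proof of Theorem~\ref{thm:cvg:oddm} to the even-order setting, using the even-order analogues already proved (Lemmas~\ref{mnng:val=:evm} and \ref{achval:Qk:evm}) together with Putinar's Positivstellensatz and the relation \reff{SDr:R0,m:S+} between $\mathscr{R}^+_{\{0,m\}}$ and its semidefinite outer approximations.

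For part (i), I would start from Lemma~\ref{mnng:val=:evm}: for every $\eps>0$ pick a feasible $p$ of \reff{max<p,f>:1>=|p|} with $\langle p,\mbf{a}\rangle \geq \|\mA\|_{\ast,\re}-\eps$, then perturb it slightly (e.g.\ shrink it by a factor $1-\delta$) to produce $p_1\in\re[x]_m^{hom}$ with $1\pm p_1>0$ on $S^+$ and $\langle p_1,\mbf{a}\rangle \geq \|\mA\|_{\ast,\re}-2\eps$. Since $1-\|x\|^2\in\mathrm{Ideal}(1-\|x\|^2)$, the combined cone $\mathrm{Ideal}(1-\|x\|^2)+\mathrm{Qmod}(\mathbf{1}^Tx)$ is archimedean, so Theorem~\ref{thm:Put} gives some $k_1$ with $1\pm p_1\in Q^+_{k_1}$. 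Then $p_1$ is feasible for \reff{mx<p,f>:1+-p:Qk} at order $k_1$, so Lemma~\ref{achval:Qk:evm} and monotonicity \reff{rhok:mcr:evm} sandwich the limit. Part (ii) is immediate: $1\pm p^*\in Q^+ = \bigcup_k Q^+_k$ means $p^*$ itself becomes feasible for \reff{mx<p,f>:1+-p:Qk} at some finite order $k_2$, giving $\|\mA\|_{k_2\ast,\re}\geq \langle p^*,\mbf{a}\rangle = \|\mA\|_{\ast,\re}$; combine with \reff{rhok:mcr:evm}.

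Part (iii) is the reverse-inclusion observation: if both truncations lie in $\mathscr{R}^+_{\{0,m\}}$, then $(y^{+,k},y^{-,k})$ is a feasible pair for the true moment problem \reff{miny0:Rm(S)}, so $\|\mA\|_{\ast,\re}\leq (y^{+,k})_0+(y^{-,k})_0 = \|\mA\|_{k\ast,\re}$, and \reff{rhok:mcr:evm} forces equality. For part (iv), boundedness follows because $M_k(z^{\pm,k})\succeq 0$ together with $L^{(k)}_{1-\|x\|^2}(z^{\pm,k})=0$ (which effectively localizes the supports to the unit sphere) bounds each entry of $z^{\pm,k}$ by a constant times $(z^{\pm,k})_0\leq \|\mA\|_{k\ast,\re}\leq \|\mA\|_{\ast,\re}$, and the same bound passes to the truncations $y^{\pm,k}$. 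Any accumulation point $(\hat y^+,\hat y^-)$ satisfies $\hat y^\pm\in\mathscr{S}^{+,2k'}_{\{0,m\}}$ for every fixed $k'$ (closedness of the projected SDP cones, using the eventual tail of the sequence), hence lies in the intersection \reff{SDr:R0,m:S+}, i.e.\ in $\mathscr{R}^+_{\{0,m\}}$. The moment constraints $(\hat y^+)_\af-(\hat y^-)_\af=\mbf{a}_\af$ pass to the limit, and $(\hat y^+)_0+(\hat y^-)_0 = \lim\|\mA\|_{k\ast,\re} = \|\mA\|_{\ast,\re}$ by part~(i); so $(\hat y^+,\hat y^-)$ is an optimizer of \reff{miny0:Rm(S)}.

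The uniqueness clause is then a standard bounded-sequence argument: if the nuclear decomposition of $\mA$ over $\re$ is unique, then there is a unique optimizer of \reff{miny0:Rm(S)} (any optimizer decomposes via the atomic measure supported on half-sphere representatives given in Step~3 of Algorithm~\ref{alg:even:m}). Every accumulation point of $\{(y^{+,k},y^{-,k})\}$ must equal this unique optimizer, and since the sequence is bounded, the whole sequence converges to it. The main obstacle I anticipate is the cleanest justification that accumulation points land in $\mathscr{R}^+_{\{0,m\}}$: formally this requires either the closedness of $\mathscr{S}^{+,2k'}_{\{0,m\}}$ for each fixed $k'$ or a direct invocation of \cite[Prop.~3.4]{LMOPT} (as used in the odd-order proof) giving $\mathrm{dist}(\mathscr{S}^{+,2k}_{\{0,m\}},\mathscr{R}^+_{\{0,m\}})\to 0$; I would lean on the latter to avoid fussing with closedness of projected spectrahedra.
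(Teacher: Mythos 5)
Your proposal is correct and follows essentially the same route as the paper's proof: part (i) via Putinar's Positivstellensatz applied to a strictly feasible near-optimal dual point together with Lemma~\ref{achval:Qk:evm} and monotonicity, parts (ii)--(iii) via the dual/primal feasibility observations, and part (iv) via boundedness from the localizing constraint $L^{(k)}_{1-\|x\|^2}=0$ plus the distance estimate of \cite[Prop.~3.4]{LMOPT} (the paper indeed invokes that proposition rather than closedness of the projected cones, exactly as you anticipated). Your explicit shrinking step in (i) to obtain strict positivity is a small point the paper leaves implicit, but otherwise the two arguments coincide.
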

\begin{proof}
(i)
By Lemma~\ref{mnng:val=:evm},
for every $\eps>0$, there exists $p_1 \in \re[x]_m^{hom}$ such that
\[
1 \pm p_1 >0  \mbox{ on }  S, \qquad
\langle p_1, \mbf{a} \rangle \geq \| \A \|_{\ast,\re} - \eps.
\]
By Theorem~\ref{thm:Put}, there exists $k_1$ such that
$
 1 \pm p_1  \in Q_{k_1}^+ .
$
By Lemma~\ref{achval:Qk:evm}, we can get
\[
\| \mA \|_{k_1\ast,\re}   \geq  \| \A \|_{\ast,\re} - \eps.
\]
The relation \reff{rhok:mcr:evm} and the above imply that
\[
\| \mA \|_{\ast,\re} \geq \lim\limits_{k\to\infty} \| \mA \|_{k\ast,\re}
\geq \| \mA \|_{\ast,\re} - \eps.
\]
The item (i) follows from that
$\eps>0$ can be arbitrarily small.

(ii)-(iii): The proof is the same
as for Theorem~\ref{thm:cvg:oddm} (ii)-(iii),
by using Lemmas~\ref{mnng:val=:evm} and \ref{achval:Qk:evm}.

(iv) Note that $y^{+,k} = z^{+,k}\big|_{ \{0,m\} }$,
$y^{-,k} = z^{-,k}\big|_{ \{0,m\} }$,
\[
M_k( z^{+,k} ) \succeq 0, \quad M_k( z^{-,k} ) \succeq 0,
\]
and $(y^{+,k})_0 + (y^{-,k})_0 = \| \mA \|_{k\ast,\re} \leq
\| \mA \|_{\ast,\re}$ for all $k$. From the condition
\[
L^{(k)}_{1-\|x\|^2} ( z^{+,k} )  =
L^{(k)}_{1-\|x\|^2} ( z^{-,k} )  = 0,
\]
one can see that the sequence of diagonal entries of
$M_k( z^{+,k} ),\, M_k( z^{-,k} )$
is bounded. Then, we can show that the sequence
$\{ (z^{+,k}, z^{-,k} ) \}$ is bounded. This implies that
$\{ (y^{+,k},  y^{-,k}) \}_{k=m_0}^\infty$
is also bounded. When $(\hat{y}^+, \hat{y}^-)$ is one of
its accumulation points, we can get
$
( \hat{y}^+)_0 + ( \hat{y}^- )_0 = \| \mA \|_{\ast,\re}
$
by evaluating the limit. Note that
$y^{+,k}, y^{-,k} \in \mathscr{S}^{+,2k}_{ \{0,m\} }$
for all $k$. The distance between $\mathscr{S}^{+,2k}_{ \{0,m\} }$ and
$\mathscr{R}^+_{ \{0,m\} }$ tends to zero as $k\to \infty$
(cf.~\cite[Prop.~3.4]{LMOPT}),
so we have $\hat{y}^+, \hat{y}^- \in  \mathscr{R}^+_{ \{0,m\} }$.
It can also be implied by \reff{SDr:R0,m:S+}.
Next, write down the decompositions:
\[
\hat{y}^+ = \sum_{i=1}^{r_1} \lmd_i^+ [v_i^+]_{0,m}, \quad
\hat{y}^- = \sum_{i=1}^{r_2} \lmd_i^- [v_i^-]_{0,m},
\]
with all $\lmd_i^+ \geq 0$, $\lmd_i^- \geq 0$,
and $v_i^+, v_i^- \in S^+$. They give the real nuclear
decomposition $\mA = \mA_1 - \mA_2$, with
\[
\mA_1 = \sum_{i=1}^{r_1} \lmd_i^+ (v_i^+)^{\otimes m}, \quad
\mA_2 = \sum_{i=1}^{r_2} \lmd_i^- (v_i^-)^{\otimes m}.
\]
When the nuclear decomposition of $\mA$ is unique,
the decompositions of $\mA_1, \mA_2$ are also unique.
So, the accumulation point $(\hat{y}^+, \hat{y}^-)$ is unique
and $(y^{+,k},  y^{-,k})$ must converge to it as $k \to \infty$.
\end{proof}

In Theorem~\ref{thm:cvg:evm}(ii), we always have
$1 \pm p^* \geq 0$ on $S^+$.
Under some general optimality conditions,
it holds that $1 \pm p^* \in Q^+$.
So, Algorithm~\ref{alg:even:m} generally has finite convergence.
This is confirmed by numerical experiments in \S\ref{sc:num}.

\section{Nuclear norms with $\F = \cpx$}
\label{sc:cpx}

When the ground field $\F = \cpx$, the nuclear norm of
$\mA \in \mt{S}^m(\cpx^n)$ is
\be \label{A:s*:C}
\| \mA \|_{\ast,\cpx} = \min \left\{ \sum_{i=1}^r |\lmd_i|  : \,
\mA = \sum_{i=1}^r  \lmd_i (w_i)^{\otimes m},
\| w_i \| = 1, w_i \in \cpx^n \right \}.
\ee
First, we formulate an optimization problem
for computing $\| \mA \|_{\ast,\cpx}$.

\begin{lemma} \label{lm:opt:cnuc}
For all $\mA \in \mt{S}^m(\cpx^n)$, $\| \mA \|_{\ast,\cpx}$
equals the optimal value of
\be \label{cnn:F=ui+vi}
\left\{ \baray{rl}
  \min & \sum_{i=1}^r  \lmd_i   \\
s.t. & \mA = \sum_{i=1}^r  \lmd_i (u_i+\sqrt{-1}v_i)^{\otimes m}, \\
 &  \lmd_i \geq 0, \, \| u_i \|^2 + \| v_i \|^2 =1, \, u_i, v_i \in \re^n,  \\
 & \mbf{1}^T v_i \geq 0, \,
\sin(\frac{2\pi}{m})\mbf{1}^T u_i - \cos(\frac{2\pi}{m}) \mbf{1}^T v_i  \geq 0.
\earay \right.
\ee
In the above, $\mbf{1}$ is the vector of all ones.
\end{lemma}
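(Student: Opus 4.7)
The plan is to prove the two inequalities $\|\mA\|_{\ast,\cpx} \leq \mbox{opt}(\ref{cnn:F=ui+vi})$ and $\mbox{opt}(\ref{cnn:F=ui+vi}) \leq \|\mA\|_{\ast,\cpx}$ separately. The first direction is immediate: every feasible point $(\lambda_i, u_i, v_i)$ of (\ref{cnn:F=ui+vi}) produces a decomposition admissible for (\ref{A:s*:C}) by setting $w_i := u_i + \sqrt{-1}v_i$, which satisfies $\|w_i\|^2 = \|u_i\|^2 + \|v_i\|^2 = 1$ and $|\lambda_i| = \lambda_i$; thus $\|\mA\|_{\ast,\cpx} \leq \sum_i \lambda_i$, and passing to the infimum gives the claim.

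For the reverse direction, I would start from an arbitrary complex decomposition $\mA = \sum_i \lambda_i w_i^{\otimes m}$ with $\lambda_i \in \cpx$, $\|w_i\| = 1$, and exploit the gauge freedom $(w_i, \lambda_i) \mapsto (e^{\sqrt{-1}\phi_i} w_i,\, \lambda_i e^{-\sqrt{-1}m\phi_i})$ which leaves each rank-one summand invariant. Requiring the rotated coefficient $\lambda_i e^{-\sqrt{-1}m\phi_i}$ to be real and non-negative pins down $m\phi_i \equiv \arg(\lambda_i) \pmod{2\pi}$, leaving exactly $m$ admissible values of $\phi_i$ modulo $2\pi$, equally spaced by $2\pi/m$. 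The task is then to select one such value for which $e^{\sqrt{-1}\phi_i} w_i = u_i + \sqrt{-1}v_i$ also satisfies the two linear sign constraints in (\ref{cnn:F=ui+vi}).

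The main obstacle is proving that one of the $m$ admissible rotations always works. The key reformulation, when $\mbf{1}^T w_i \neq 0$, is to introduce $\alpha_i := \arg(\mbf{1}^T u_i + \sqrt{-1}\mbf{1}^T v_i)$: the first sign condition $\mbf{1}^T v_i \geq 0$ becomes $\sin \alpha_i \geq 0$, and by the sine subtraction identity the second condition becomes $\sin(2\pi/m - \alpha_i) \geq 0$. For $m \geq 2$, the intersection of these two arcs is precisely $\alpha_i \in [0, 2\pi/m]$, a closed arc of length $2\pi/m$ on the circle. Since rotation by $e^{\sqrt{-1}\phi_i}$ shifts $\alpha_i$ by $\phi_i$, the $m$ admissible choices of $\phi_i$ yield $m$ values of $\alpha_i$ equally spaced by $2\pi/m$, and any closed arc of that length must contain at least one of them by a pigeonhole argument. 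The degenerate case $\mbf{1}^T w_i = 0$ is trivial because $\mbf{1}^T u_i = \mbf{1}^T v_i = 0$ after any rotation, so both inequalities reduce to $0 \geq 0$. Applying these rotations term by term preserves $\sum_i |\lambda_i|$, producing a decomposition feasible for (\ref{cnn:F=ui+vi}) with objective value $\sum_i |\lambda_i|$; taking the infimum yields $\mbox{opt}(\ref{cnn:F=ui+vi}) \leq \|\mA\|_{\ast,\cpx}$, completing the proof.
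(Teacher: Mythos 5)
Your proof is correct and follows essentially the same route as the paper: both reduce to nonnegative $\lambda_i$, observe the residual $m$-th-root-of-unity gauge freedom, and argue that the two sign constraints describe a closed arc of length $2\pi/m$ which must contain one of the $m$ equally spaced admissible rotations. You are somewhat more explicit than the paper about the phase-absorption step that makes $\lambda_i \geq 0$ and about the degenerate case $\mbf{1}^T w_i = 0$, but the underlying argument is the same.
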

\begin{proof}
The decomposition of $\mA$ as in \reff{A:s*:C} is equivalent to
\[
\mA = \sum_{i=1}^r  \lmd_i (\tau_i w_i)^{\otimes m},
\]
for all unitary $\tau_i \in \cpx$ with $\tau_i^m = 1$. Write
\[
w_i = u_i + \sqrt{-1} v_i, \quad u_i, v_i \in \re^n,
\]
then
\[
\baray{rcl}
\mbf{1}^T w_i &=& (\mbf{1}^T u_i) + \sqrt{-1} (\mbf{1}^T v_i), \\
\mbf{1}^T (\tau_i w_i ) &=& \tau_i
\Big( (\mbf{1}^T u_i) + \sqrt{-1} (\mbf{1}^T v_i) \Big).
\earay
\]
Write $\mbf{1}^T w_i = r e^{\sqrt{-1} \theta }$ with $r\geq 0$,
$0 \leq \theta < 2\pi$. There always exists $k\in \{0,1,\ldots,m-1\}$ such that
\[
0 \leq \theta - 2k\pi/ m < 2\pi/m.
\]
If we choose $\tau_i = e^{ - 2k\pi \sqrt{-1} / m }$, then
\[
\mbf{1}^T (\tau_i w_i) = r e^{\sqrt{-1} \theta_1 }, \quad
0 \leq \theta_1 < 2\pi/m.
\]
Therefore, without loss of generality, we can assume
$\mbf{1}^T w_i = r e^{\sqrt{-1} \theta }$, with
$0 \leq \theta < 2\pi/m$, in \reff{A:s*:C}. This means that
($\mbf{Im}$ denotes the imaginary part)
\[
\mbf{Im}(  \mbf{1}^T w_i    ) \geq 0, \quad
\mbf{Im}( e^{ \frac{- 2\pi}{m}  \sqrt{-1}  }    \mbf{1}^T w_i    ) \leq 0,
\]
which are equivalent to the conditions
\[
\mbf{1}^T v_i \geq 0, \quad
\sin(2\pi/m)\mbf{1}^T u_i - \cos(2\pi/m) \mbf{1}^T v_i  \geq 0.
\]
Then, the lemma follows from \reff{A:s*:C}.
\end{proof}

A complex vector in $\cpx^n$ can be represented by a
$2n$-dimensional real vector. Let $x = ( x^{re}, x^{im} )$ with
\[
x^{re} =(x_1,\ldots, x_n), \quad x^{im} = (x_{n+1}, \ldots, x_{2n}).
\]
Denote the set
\be \label{set:Sc}
S^c := \left\{ x=(x^{re}, x^{im})
\left|
\baray{c}
\| x^{re} \|^2 + \| x^{im} \|^2 =1, \,
\, x^{re}, x^{im} \in \re^n, \\
\mbf{1}^T x^{im} \geq 0, \,
\sin( \frac{2\pi}{m} )\mbf{1}^T x^{re}
-\cos(\frac{2\pi}{m}) \mbf{1}^T x^{im}  \geq 0
\earay
\right. \right\}.
\ee
For the decomposition of $\mA$ as in \reff{cnn:F=ui+vi},
the weighted Dirac masure
\[
\mu :=  \lmd_1 \dt_{(u_1,v_1)} + \cdots +  \lmd_r \dt_{(u_r,v_r)}
\]
belongs to $\mathscr{B}(S^c)$, the set of Borel measures supported on $S^c$.
It satisfies
\be \label{F=int:dmu:C}
 \mA = \int (x^{re} + \sqrt{-1} x^{im}  )^{\otimes m} \mt{d} \mu.
\ee
Note that
$
\lmd_1 + \cdots + \lmd_r = \int 1 \mt{d} \mu.
$
Conversely, for every $\mu \in \mathscr{B}(S^c)$ satisfying \reff{F=int:dmu:C},
we can always get a decomposition of $\mA$ as in \reff{cnn:F=ui+vi}.
This can be implied by \cite[Prop.~3.3]{ATKMP}.
By Lemma~\ref{lm:opt:cnuc}, $\| \mA \|_{\ast,\cpx}$
equals the optimal value of
\be  \label{cnn:F=int:xom:C}
\left\{\baray{rl}
\min  &  \int 1 \mt{d} \mu \\
s.t. &  \mA = \int (x^{re} + \sqrt{-1} x^{im}  )^{\otimes m} \mt{d} \mu,  \\
&  \mu \in \mathscr{B}(S^c).
\earay\right.
\ee
Note that $S^c = \{ x \in \re^{2n}: \,
h(x) = 0, g_1(x) \geq 0, g_2(x) \geq 0 \}$
where
\be \label{df:h:g:C}
h := x^Tx -1, \,\, g_1: = \mbf{1}^T x^{im}, \,\,
g_2 :=\sin( \frac{2\pi}{m} )\mbf{1}^T x^{im} - \cos( \frac{2\pi}{m} ) \mbf{1}^T x^{re}.
\ee
Let $\mbf{a}^{re}, \mbf{a}^{im} \in \re^{ \N^n_{ \{m\} } }$ be the real vectors such that
\be \label{F=fre+fim}
\mbf{a}^{re}_\af + \sqrt{-1} \mbf{a}^{im}_\af =
\mA_\af \quad \mbox{ if } \quad
x^\af = x_{i_1}\cdots x_{i_m}.
\ee
For each $\af = (\af_1, \ldots, \af_n) \in \N^{n}_{ \{m\} }$,
expand the product
\be \label{Raf:Taf}
(x_1+\sqrt{-1}\, x_{n+1})^{\af_1} \cdots (x_n + \sqrt{-1}\,x_{2n})^{\af_n}
=R_\af(x) + \sqrt{-1}\, T_\af(x),
\ee
for real polynomials
$R_\af, T_\af \in \re[x]:=\re[x_1,\ldots,x_{2n}]$. Then,
\[
\int (x_1+\sqrt{-1}\, x_{n+1})^{\af_1} \cdots (x_n + \sqrt{-1}\,x_{2n})^{\af_n}
\mt{d} \mu
\]
\[
=\int R_\af(x) \mt{d} \mu  + \sqrt{-1}\int T_\af(x) \mt{d} \mu.
\]
Hence, \reff{cnn:F=int:xom:C} is equivalent to
\be \label{cnnopt:F=R+T:mu(C)}
\left\{ \baray{rl}
  \min  &  \int 1 \mt{d} \mu \\
s.t. &  \mbf{a}^{re}_\af = \int R_\af(x) \mt{d} \mu \,\,
(\af \in \N^{n}_{ \{m\} }),  \\
& \mbf{a}^{im}_\af = \int T_\af(x) \mt{d} \mu \,\,
(\af \in \N^{n}_{ \{m\} }),  \\
& \mu \in \mathscr{B}(S^c).
\earay \right.
\ee
To solve \reff{cnnopt:F=R+T:mu(C)}, we can replace
$\mu$ by the vector of its moments. Denote the moment cone
\be \label{scrR(C):0+m}
\mathscr{R}^{c}_{ \{0,m\} }  := \left\{
y \in \re^{ \N^{2n}_{ \{0,m\} } }
\left| \baray{c}
 \exists \mu \in \mathscr{B}(S^c) \, \mbox{ such that }   \\
y_\bt = \int x^\bt \mt{d} \mu\,\,\mbox{ for } \, \bt \in \N^{2n}_{ \{0,m\} }
\earay \right.
\right\}.
\ee
So, \reff{cnnopt:F=R+T:mu(C)} is equivalent to the optimization problem
\be \label{cnn:F=R+T:y(C)}
\left\{ \baray{rl}
 \min  &  (y)_0 \\
s.t. &  \langle R_\af, y \rangle =  \mbf{a}^{re}_\af \,\,
(\af \in \N^{n}_{ \{m\} }),  \\
& \langle T_\af, y \rangle = \mbf{a}^{im}_\af \,\,
(\af \in \N^{n}_{ \{m\} }),  \\
& y \in \mathscr{R}^{c}_{ \{0, m\} }.
\earay \right.
\ee

\subsection{An algorithm}

The cone $\mathscr{R}^{c}_{ \{0, m\} }$
can be approximated by semidefnite relaxations.
For $h, g_1, g_2$ as in \reff{df:h:g:C}, denote the cones
\begin{align}
\label{scr(S):C:2k}
\mathscr{S}^{c, 2k} & := \left\{
z \in \re^{ \N^{2n}_{ [0,2k] } }
\left| \baray{c}
M_k(z) \succeq 0,  L^{(k)}_{h}(z) = 0, \\
 L^{(k)}_{g_1}(z) \succeq 0,  L^{(k)}_{g_2}(z) \succeq 0
\earay \right.
\right\}, \\
\label{scr(S):C:2k:0+m}
\mathscr{S}^{c,2k}_{ \{0,m\} } & := \left\{
y \in \left. \re^{ \N^{2n}_{ \{0,m\} } } \right|
\exists \, z \in  \mathscr{S}^{c,2k}, \,  \, y = z|_{ \{0,m \} }
\right\}.
\end{align}
Clearly, $\mathscr{S}^{c,2k}_{ \{0,m\} }$ is a projection of
$\mathscr{S}^{c,2k}$. For all $k\geq m/2$, we have
\[
\mathscr{R}^{c}_{ \{0,m\} }  \subseteq
\mathscr{S}^{c,2k+2}_{ \{0,m\} } \subseteq \mathscr{S}^{c,2k}_{ \{0,m\} }.
\]
Indeed, it holds that (cf.~\cite[Prop.~3.3]{LMOPT})
\be  \label{sdr:R:Sc}
\mathscr{R}^{c}_{ \{0,m\} } = \bigcap_{k \geq m/2 }
\mathscr{S}^{c, 2k}_{ \{0,m\} }.
\ee
This produces the hierarchy of semidefinite relaxations
\be \label{cnn:mom:z(C)}
\left\{ \baray{rl}
\| \mA \|_{k\ast,\cpx}  \, := \, \min  &  (z)_0 \\
s.t. &  \langle R_\af, z \rangle =  \mbf{a}^{re}_\af \,\,
(\af \in \N^{n}_{ \{m\} }),  \\
& \langle T_\af, z \rangle = \mbf{a}^{im}_\af \,\,
(\af \in \N^{n}_{ \{m\} }),  \\
& z \in \mathscr{S}^{c,2k},
\earay \right.
\ee
for $k=m_0, m_0+1, \ldots$ ($m_0=\lceil m/2 \rceil$).
Like \reff{rhok:mcr:evm}, we also have
\be  \label{rl:rhok:cpx}
\| \mA \|_{m_0\ast,\cpx} \leq \cdots \leq
\| \mA \|_{k\ast,\cpx} \leq \cdots \leq \| \mA \|_{\ast,\cpx}.
\ee

\begin{alg} \label{alg:cnn:cpx}
For a given tensor $\mA \in \mt{S}^m(\cpx^n)$,
let $k = m_0$ and do:
\bit

\item [Step 1] Solve the semidefinite relaxation \reff{cnn:mom:z(C)},
for an optimizer $z^{k}$.

\item [Step 2]  Let $y^{k} := z^{k}\big|_{ \{0,m\} }$
(see \reff{trun:z|0,m} for the truncation).
Check whether or not
$y^{k} \in \mathscr{R}^{c}_{ \{0,m\} }$.
If yes, then
$\| \mA \|_{\ast, \re} = \| \mA \|_{k\ast,\cpx}$ and go to Step~3;
otherwise, let $k :=k+1$ and go to Step~1.

\item [Step 3] Compute the decompositions of $y^{k}$ as
\[
y^{k} =  \lmd_1 [(u_1,v_1)]_{0,m} + \cdots + \lmd_r [(u_r,v_r)]_{0,m}
\]
with all $\lmd_i >0, (u_i,v_i) \in S^c$.
This gives the nuclear decomposition
\[
\mA =   \lmd_1 (u_1 + \sqrt{-1}\, v_1)^{\otimes m}  + \cdots +
\lmd_r (u_r + \sqrt{-1}\, v_r)^{\otimes m}
\]
such that
$\sum_{i=1}^{r} \lmd_i  = \| \mA \|_{\ast,\cpx}$.

\eit

\end{alg}

In the above, the method in \cite{ATKMP}
can be used to check if $y^{k} \in \mathscr{R}^c_{ \{0,m\} }$ or not.
If yes, we can also get a nuclear decomposition.
It requires to solve a moment optimization problem
whose objective is randomly generated.

\subsection{Convergence properties}

Denote the real polynomial vectors:
\[
R(x):= (R_\af(x))_{ \af \in \N^{n}_{ \{m\} } }, \quad
T(x):= (T_\af(x) )_{ \af \in \N^{n}_{ \{m\} } },
\]
where $R_\af, T_\af$ are as in \reff{Raf:Taf}.
Their length $D=\binom{2n+m-1}{m}$. Denote
\be
\mathscr{P}(S^c)_{0,m} \, := \, \{ t + p  \, \mid \,
t \in \re, \, p\in \re[x]_m^{hom}, \, t+p \geq 0 \mbox{ on } \, S^c
\}.
\ee
The cones $\mathscr{R}^c_{\{0,m\}}$ and
$\mathscr{P}(S^c)_{0,m}$ are dual to each other \cite{LMOPT},
so the dual optimization problem of \reff{cnn:F=R+T:y(C)} is
\be \label{mx<p,f>:p1p2:P(C)}
\left\{ \baray{rl}
 \max\limits_{ p_1, p_2 \in \re^D } &
 p_1^T \mbf{a}^{re}    +  p_2^T \mbf{a}^{im}  \\
s.t. \quad &  1 - p_1^T R(x) - p_2^T T(x) \in \mathscr{P}(S^c)_{0,m}.
\earay \right.
\ee

\begin{lemma} \label{monng:dual:Cpx}
Let $\mbf{a}^{re}, \mbf{a}^{im}$ be as in \reff{F=fre+fim}.
Then, both \reff{cnn:F=R+T:y(C)} and \reff{mx<p,f>:p1p2:P(C)}
achieve the same optimal value which equals
$\| \mA \|_{\ast,\cpx}$.
\end{lemma}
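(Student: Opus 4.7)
The plan is to follow the same pattern as Lemmas \ref{nng:opval:odm} and \ref{mnng:val=:evm}. First I would trace the chain of equivalent reformulations that led to \reff{cnn:F=R+T:y(C)}: Lemma \ref{lm:opt:cnuc} identifies $\|\mA\|_{\ast,\cpx}$ with the value of \reff{cnn:F=ui+vi}; encoding each summand as a weighted Dirac mass on $S^c$ and invoking \cite[Prop.~3.3]{ATKMP} rewrites this as \reff{cnn:F=int:xom:C}; splitting real and imaginary parts of the tensor equation via \reff{Raf:Taf} produces \reff{cnnopt:F=R+T:mu(C)}; finally, replacing $\mu$ by its moment vector from \reff{scrR(C):0+m} yields \reff{cnn:F=R+T:y(C)}. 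Hence the primal value of \reff{cnn:F=R+T:y(C)} is exactly $\|\mA\|_{\ast,\cpx}$, and attainment on the primal side comes with this identification.

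Next I would apply the linear conic duality of \cite[\S2.4]{BTN}. The origin $(p_1,p_2)=(0,0)$ is strictly feasible for \reff{mx<p,f>:p1p2:P(C)}, because the corresponding polynomial is the constant $1$, which is strictly positive on the compact set $S^c$, so small perturbations of $(p_1,p_2)$ keep $1-p_1^T R(x)-p_2^T T(x)$ strictly positive on $S^c$. Strong duality then gives equality of the two optimal values together with attainment by \reff{cnn:F=R+T:y(C)}, and the common value equals $\|\mA\|_{\ast,\cpx}$.

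To show that \reff{mx<p,f>:p1p2:P(C)} also attains its supremum, I would verify that its feasible set is compact. Closedness of $\mathscr{P}(S^c)_{0,m}$ is routine, so the point is boundedness. Set $q(w):=\sum_{\af}\big((p_1)_\af-\sqrt{-1}(p_2)_\af\big)w^\af$ with $w=x^{re}+\sqrt{-1}\,x^{im}$; then $p_1^T R(x)+p_2^T T(x)=\mbf{Re}(q(w))$ and the feasibility condition reads $\mbf{Re}(q(w))\leq 1$ for $x\in S^c$. By the rotation argument inside the proof of Lemma~\ref{lm:opt:cnuc}, every complex unit vector $w$ admits an $m$-th root of unity $\tau$ with $\tau w\in S^c$, and homogeneity gives $q(\tau w)=\tau^m q(w)=q(w)$. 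Therefore $\mbf{Re}(q(w))\leq 1$ holds on the entire complex unit sphere $\{w\in\cpx^n:\|w\|=1\}$.

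The main obstacle is to promote this one-sided bound to a bound on $(p_1,p_2)$. Suppose for contradiction that some feasible sequence has $\|(p_1^{(k)},p_2^{(k)})\|\to\infty$; after normalizing and passing to a subsequence we obtain a unit-norm limit $(a_1,a_2)$ producing a nonzero homogeneous polynomial $q_*$ of degree $m$ in $w$. Dividing the feasibility inequality by the norm and taking limits yields $\mbf{Re}(q_*(w))\leq 0$ on the whole complex unit sphere. Replacing $w$ by $e^{\sqrt{-1}\,\theta}w$ and using homogeneity of degree $m$ gives $\mbf{Re}\big(e^{\sqrt{-1}\,m\theta}q_*(w)\big)\leq 0$ for every real $\theta$; varying $\theta$ forces $q_*(w)=0$ on the unit sphere, hence $q_*\equiv 0$, a contradiction. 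Thus the dual feasible set is bounded, hence compact, and the dual also attains its optimum.
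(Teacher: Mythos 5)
Your proof is correct and follows essentially the same route as the paper: both establish strong duality via the interior point $(p_1,p_2)=(0,0)$, then prove dual attainment by passing to $q(w)$ with $\mbf{Re}\,q=p_1^TR+p_2^TT$, using the rotation argument from Lemma~\ref{lm:opt:cnuc} to extend the bound $\mbf{Re}\,q(w)\leq 1$ from $B$ to the full complex unit sphere, and exploiting homogeneity. The only cosmetic difference is at the final step: the paper multiplies $w$ by a suitable $e^{\sqrt{-1}\theta}$ to conclude $|q(w)|\leq 1$ on the sphere and then appeals to equivalence of norms on the finite-dimensional coefficient space, whereas you run a normalization-and-contradiction argument, which amounts to a proof of that norm equivalence; both are valid and equivalent in substance.
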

\begin{proof}
The origin is an interior point of \reff{mx<p,f>:p1p2:P(C)}.
So, \reff{cnn:F=R+T:y(C)} and \reff{mx<p,f>:p1p2:P(C)}
have the same optimal value,
and \reff{cnn:F=R+T:y(C)} achieves it (cf.~\cite[\S2.4]{BTN}).
In the next, we prove that \reff{mx<p,f>:p1p2:P(C)}
also achieves its optimal value. Let
\[
w = x^{re} + \sqrt{-1} x^{im}, \quad
q_\af = (p_1)_\af - \sqrt{-1} (p_2)_\af.
\]
\[
\baray{rcl}
q(w) &=&  \sum_{ |\af| =m}  q_\af
\big( R_\af(x) + \sqrt{-1} T_\af(x) \big) \\
 & = &  \sum_{ |\af| =m}  q_\af  w^\af .
\earay
\]
Clearly, $q(w)$ is a form of degree $m$ and
in $w \in \cpx^n$, and
($\mbf{Re}$ denotes the real part)
\[
\mbf{Re} \,\, q(w) = p_1^T R(x) + p_2^T T(x).
\]
Let
$
B=\{ x^{re} + \sqrt{-1} x^{im}: \, (x^{re}, x^{im} ) \in S^c \},
$
which is a subset of the complex unit sphere $\| w \| =1$.
When $(p_1, p_2)$ is feasible for \reff{mx<p,f>:p1p2:P(C)},
the polynomial
\[
p(x) \, := \, 1 - p_1^T R(x) - p_2^T T(x)
\quad \geq 0 \quad \mbox{ on } S^c.
\]
So,
$
\mbf{Re}\,\, q(w) \leq 1
$
for all $w \in B.$
For all $w \in \cpx^n$ with $\|w\|=1$, there exist
$\tau^m=1$ and $a \in B$ such that $w=\tau a$.
This is shown in the proof of Lemma~\ref{lm:opt:cnuc}, so
\[
\mbf{Re}\,\, q(w)   = \mbf{Re}\,\, q(\tau a)
=  \mbf{Re}\,\, q( a)  \leq 1.
\]
The above is true for all unit complex vectors $w$, hence
\[
\mbf{Re}\,\, q(w) \leq 1 \quad \forall w \in \cpx^n: \, \|w\|=1.
\]
Because $q(w)$ is homogeneous in $w$, the above implies that
\[
|q(w)| \leq 1 \quad \forall w \in \cpx^n: \, \|w\|=1.
\]
So, there exists $M >0$ such that
$ \|vec(q) \| \leq M $
for all $q$ satisfying the above.
Since $\| vec(q) \|^2 = \| vec(p_1) \|^2 + \| vec(p_2) \|^2$,
the feasible set of \reff{mx<p,f>:p1p2:P(C)} is compact.
So, \reff{mx<p,f>:p1p2:P(C)} must achieve its optimal value.
\end{proof}

Next, we study the properties of the relaxation \reff{cnn:mom:z(C)}.
For $h,g:=(g_1,g_2)$ as in \reff{df:h:g:C}, denote the cones of polynomials
\be \label{Qkc:cpx}
Q_k^c \, := \, \mbox{Ideal}(h)_{2k} + \mbox{Qmod}_{2k}(g),
\quad Q^c \, := \, \bigcup_{k\geq 1} Q_k^c.
\ee
The cones $Q_k^c$ and $\mathscr{S}^{c,2k}$
are dual to each other \cite{LMOPT},
so the dual optimization problem of \reff{cnn:mom:z(C)} is
\be \label{mx<p1+p2,f>:SOS:Q(C)}
\left\{ \baray{rl}
 \max\limits_{ p_1, p_2 \in \re^D } &
 p_1^T \mbf{a}^{re}    +  p_2^T \mbf{a}^{im}  \\
s.t. &  1 - p_1^T R(x) - p_2^T T(x) \in Q_k^c.
\earay \right.
\ee

\begin{lemma} \label{lm:sos-mom:QkSc}
Let $\mbf{a}^{re}, \mbf{a}^{im}$ be the real vectors as in \reff{F=fre+fim}.
Then, for each $k\geq m_0$, both \reff{cnn:mom:z(C)} and \reff{mx<p1+p2,f>:SOS:Q(C)}
achieve the same optimal value which equals $\| \mA \|_{k\ast,\cpx}$.
Moreover, $\| \mA \|_{k\ast,\cpx}$
is a norm function in $\mA \in \mt{S}^m(\cpx^n)$.
\end{lemma}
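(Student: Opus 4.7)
The proof will follow the template established in Lemmas~\ref{ach:opv:k:om} and~\ref{achval:Qk:evm}, adapted to the complex setting. I would split it into three stages: strong duality with primal attainment, dual attainment via compactness, and verification of the four norm axioms.

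For strong duality, observe that $(p_1, p_2) = (0, 0)$ is an interior feasible point of \reff{mx<p1+p2,f>:SOS:Q(C)} since $1 \in \Sig[x]_{2k} \subseteq Q_k^c$ lies in the interior of $Q_k^c$. By the linear conic duality theorem \cite[\S 2.4]{BTN}, the primal \reff{cnn:mom:z(C)} and the dual \reff{mx<p1+p2,f>:SOS:Q(C)} share the same optimal value (which I define to be $\| \mA \|_{k\ast,\cpx}$), and the primal attains it. For attainment of the dual, $Q_k^c$ is closed by Theorem~3.35 of \cite{Laurent} (equivalently Theorem~3.1 of \cite{Marsh03}), since the vanishing ideal of $S^c$ is principal, generated by $h = x^T x - 1$. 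Compactness of the dual feasible set follows exactly as in Lemma~\ref{monng:dual:Cpx}: dual feasibility forces $\mbf{Re}\, q(w) \leq 1$ on $S^c$, which via the $\Z_m$-rotation trick lifts to the $U(1)$-invariant bound $|q(w)| \leq 1$ on the whole complex unit sphere, providing a uniform norm bound on $(p_1, p_2)$.

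For the norm axioms I mirror items (1)--(4) in the proof of Lemma~\ref{ach:opv:k:om}. Nonnegativity follows because $(z)_0$ is a diagonal entry of $M_k(z) \succeq 0$. If $\| \mA \|_{k\ast,\cpx} = 0$, then at the optimum $(z)_0 = 0$, and $M_k(z) \succeq 0$ forces $z = 0$; the equality constraints then give $\mbf{a}^{re} = \mbf{a}^{im} = 0$, so $\mA = 0$. The triangle inequality follows from the fact that the feasible set of \reff{cnn:mom:z(C)} is jointly convex in $(z, \mA)$ with linear objective in $z$, so $\| \mA \|_{k\ast,\cpx}$ is convex in $\mA$ by \cite[\S 3.2.5]{BVbook}. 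For positive homogeneity $\| \tau \mA \|_{k\ast,\cpx} = |\tau|\,\| \mA \|_{k\ast,\cpx}$ with $\tau \in \cpx$, the scaling $z \mapsto |\tau| z$ handles the modulus factor, so it remains to verify phase invariance $\| e^{i\psi} \mA \|_{k\ast,\cpx} = \| \mA \|_{k\ast,\cpx}$.

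The phase-invariance step is where the main obstacle lies. A naive change of variables $x \mapsto R_{\psi/m}\,x$ in $\re^{2n}$ (rotation by $\psi/m$ in each plane $(x_j, x_{n+j})$) correctly sends $w^\af$ to $e^{i\psi} w^\af$ for $|\af| = m$, matching the transformation of the moment constraints, but it fails to preserve the localizing conditions $L^{(k)}_{g_1}(z) \succeq 0$ and $L^{(k)}_{g_2}(z) \succeq 0$, because $S^c$ is a $\Z_m$-fundamental domain and not $U(1)$-invariant. I would therefore work on the dual side: for a dual optimizer $q^*$ corresponding to $\mA$, the substitution $q' = e^{-i\psi} q^*$ preserves the objective value, and the $U(1)$-invariant bound $|q^*(w)| \leq 1$ (extracted from the proof of Lemma~\ref{monng:dual:Cpx}) yields the pointwise inequality $\mbf{Re}\, q'(w) \leq 1$ on $S^c$. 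The remaining, delicate step is to upgrade this pointwise nonnegativity to membership $1 - \mbf{Re}\, q' \in Q_k^c$ at the fixed truncation order $k$; this I would attempt by exploiting the $\Z_m$-equivariance of $\mbox{Qmod}_{2k}(g_1, g_2)$ under the underlying group action, or by extracting a representation by averaging over the $\Z_m$-orbit of the SOS certificate for $1 - \mbf{Re}\, q^*$.
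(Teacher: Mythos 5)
Your reconstruction of the duality and attainment argument matches the paper exactly: interior point $(0,0)$ of the dual, closedness of $Q_k^c$ from \cite[Theorem~3.35]{Laurent}/\cite[Theorem~3.1]{Marsh03} since the vanishing ideal of $S^c$ is $\mbox{Ideal}(h)$, compactness of the dual feasible set by containment $Q_k^c \subseteq \mathscr{P}(S^c)_{0,m}$ and the bound from Lemma~\ref{monng:dual:Cpx}, and conic duality \cite[\S2.4]{BTN}. For the norm axioms, the nonnegativity, definiteness (which, as in Lemma~\ref{ach:opv:k:om}, also needs the flatness relations from $L^{(k)}_h(z)=0$, not just $M_k(z)\succeq 0$), and triangle inequality are handled as you say. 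The paper itself gives none of this for the norm part; it merely says the verification is ``almost the same'' as Lemmas~\ref{ach:opv:k:om} and~\ref{achval:Qk:evm} and omits it.

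You are right that the homogeneity step is where the analogy genuinely breaks, and this is the most valuable observation in your write-up. In Lemma~\ref{ach:opv:k:om} the sign flip $s(z)_\af=(-1)^{|\af|}z_\af$ preserves $\mathscr{S}^{2k}$ because $S$ is $\Z_2$-invariant and $1-\|x\|^2$ is even; in Lemma~\ref{achval:Qk:evm} the sign change $\mA\mapsto-\mA$ is absorbed by swapping the two measure variables $z^+\leftrightarrow z^-$. Neither device carries over to \reff{cnn:mom:z(C)}: there is a single $z$, and the localizing polynomials $g_1,g_2$ cut out a sector that is not preserved by $x\mapsto -x$ (they change sign, so $L^{(k)}_{g_i}(s(z))=-DL^{(k)}_{g_i}(z)D\preceq 0$) nor by the plane rotation $R_{\psi/m}$ (which maps $g_i$ to unrelated linear forms). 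So even the real homogeneity $\|{-\mA}\|_{k*,\cpx}=\|\mA\|_{k*,\cpx}$, let alone full $U(1)$-invariance, does not follow from the cited lemmas. However, your sketched repair does not close the gap either: $\mbox{Qmod}_{2k}(g_1,g_2)$ is \emph{not} $\Z_m$-equivariant (under $R_{2\pi/m}$ the generators $g_1,g_2$ become the defining inequalities of the adjacent sector, yielding a different truncated quadratic module), and averaging an SOS certificate over the $\Z_m$-orbit would certify nonnegativity of the symmetrized polynomial, not of $1-\mbf{Re}\,q'$ itself. To be fair, this is a gap you inherited from the paper rather than created, and the norm assertion is not invoked in the subsequent convergence theorem, which relies only on the duality and attainment part of the lemma.
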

\begin{proof}
The proof is almost the same as for Lemmas~\ref{ach:opv:k:om}
and \ref{achval:Qk:evm}.
For each $k\geq m_0$, the origin is an interior point of
\reff{mx<p1+p2,f>:SOS:Q(C)}.
The vanishing ideal of $S^c$ is $\mbox{Ideal}(h)$,
so the set $Q_k^c$ is closed, implied by Theorem~3.35 of \cite{Laurent}
or Theorem~3.1 of \cite{Marsh03}.
In the proof of Lemma~\ref{monng:dual:Cpx},
we showed that the feasible set of \reff{mx<p,f>:p1p2:P(C)}
is compact. Since $Q_k^c \subseteq \mathscr{P}(S^c)_{\{0,m\}}$,
the feasible set of \reff{mx<p1+p2,f>:SOS:Q(C)} is also compact.
By the linear conic duality theory \cite[\S2.4]{BTN},
both \reff{cnn:mom:z(C)} and \reff{mx<p1+p2,f>:SOS:Q(C)}
achieve the same optimal value.
We can similarly prove that
$\| \mA \|_{k\ast,\cpx}$ is a norm function in $\mA$.
We omit the proof here, since it is almost the same as for
Lemmas~\ref{ach:opv:k:om} and \ref{achval:Qk:evm}.
\end{proof}

The convergence properties of Algorithm~\ref{alg:cnn:cpx} are as follows.

\begin{theorem}  \label{thm:cvg:cpx}
Let $\| \mA \|_{k\ast,\cpx}$ be the optimal value of \reff{cnn:mom:z(C)}.
For all $\mA \in \mt{S}^m(\cpx^n)$,
Algorithm~\ref{alg:cnn:cpx} has the following properties:
\bit
\item [(i)]
$\lim\limits_{k\to\infty} \| \mA \|_{k\ast,\cpx}
= \| \mA \|_{\ast,\cpx}$.

\item [(ii)] Let $(p_1^*, p_2^*)$ be an optimal pair for
\reff{mx<p,f>:p1p2:P(C)}. If
\[
1-(p_1^*)^TR(x) -(p_2^*)^T T(x) \in Q^c,
\]
then $\| \mA \|_{k\ast,\cpx} = \| \A \|_{\ast, \cpx}$
for all $k$ sufficiently big.

\item [(iii)] If $y^{k} \in \mathscr{R}^{c}_{ \{0,m\} }$ for some order $k$,
then $\| \mA \|_{k\ast,\cpx} = \| \A \|_{\ast,\cpx}$.

\item [(iv)] The sequence $\{ y^{k} \}_{k=m_0}^\infty$ is bounded,
and each of its accumulation points belongs to $\mathscr{R}^{c}_{ \{0,m\} }$.
Moreover, if the nuclear decomposition of $\mA$ over $\cpx$ is unique,
then $y^{k}$ converges to a point in
$\mathscr{R}^{c}_{ \{0,m\} }$ as $k \to \infty$.

\eit
\end{theorem}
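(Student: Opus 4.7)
The plan is to mimic the proofs of Theorems~\ref{thm:cvg:oddm} and~\ref{thm:cvg:evm}, transplanting the real-sphere arguments to the real-coordinate representation $S^c \subseteq \re^{2n}$ of the complex unit sphere. The key preparatory observation is that $\mbox{Ideal}(h) + \mbox{Qmod}(g)$, with $h$ and $g=(g_1,g_2)$ from \reff{df:h:g:C}, is archimedean on $\re^{2n}$: since $-h = 1-\|x\|^2 \in \mbox{Ideal}(h)$, the polynomial $N-\|x\|^2 = (N-1)+(1-\|x\|^2)$ lies in $\mbox{Ideal}(h)+\mbox{Qmod}(g)$ for every $N>1$. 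This activates Putinar's Positivstellensatz (Theorem~\ref{thm:Put}) in what follows.

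For (i), I invoke Lemma~\ref{monng:dual:Cpx} to pick an optimizer $(p_1^\star, p_2^\star)$ of \reff{mx<p,f>:p1p2:P(C)}. Given $\eps>0$, I perturb it to $((1-\dt)p_1^\star,(1-\dt)p_2^\star)$ with $\dt>0$ small enough that the objective drops by at most $\eps$ and the polynomial $1-(1-\dt)\bigl[(p_1^\star)^T R(x)+(p_2^\star)^T T(x)\bigr]$ is strictly positive on the compact set $S^c$. By Theorem~\ref{thm:Put} this perturbed polynomial lies in $Q_{k_1}^c$ for some $k_1$; Lemma~\ref{lm:sos-mom:QkSc} then yields $\| \mA \|_{k_1\ast,\cpx} \geq \| \mA \|_{\ast,\cpx}-\eps$. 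Combined with the monotonicity \reff{rl:rhok:cpx}, sending $\eps\to 0$ proves (i).

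Parts (ii) and (iii) are direct analogues of the corresponding items in Theorems~\ref{thm:cvg:oddm} and~\ref{thm:cvg:evm}. For (ii), the hypothesis places the polynomial in $Q_{k_2}^c$ for some $k_2$, making $(p_1^\star,p_2^\star)$ feasible in \reff{mx<p1+p2,f>:SOS:Q(C)} at order $k_2$; by Lemma~\ref{lm:sos-mom:QkSc} this gives $\| \mA \|_{k_2\ast,\cpx}\geq \| \mA \|_{\ast,\cpx}$, and monotonicity turns this into equality for all $k\geq k_2$. For (iii), if $y^k\in\mathscr{R}^c_{\{0,m\}}$ then $y^k$ is feasible for \reff{cnn:F=R+T:y(C)} with objective $(y^k)_0 = \| \mA \|_{k\ast,\cpx}$, so Lemma~\ref{monng:dual:Cpx} gives $\| \mA \|_{k\ast,\cpx}\geq \| \mA \|_{\ast,\cpx}$, and \reff{rl:rhok:cpx} forces equality.

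For (iv) the argument parallels Theorem~\ref{thm:cvg:evm}(iv). The constraint $L^{(k)}_h(z^k)=0$ applied to monomial test polynomials yields the identity $\sum_{i=1}^{2n}(z^k)_{2\af+2e_i}=(z^k)_{2\af}$ for every $\af$ with $2|\af|\leq 2k-2$; together with $(z^k)_0=\| \mA \|_{k\ast,\cpx}\leq \| \mA \|_{\ast,\cpx}$, an induction on $|\af|$ bounds every diagonal entry of $M_k(z^k)$, and $M_k(z^k)\succeq 0$ then bounds all entries. Hence $\{z^k\}$, and therefore $\{y^k\}$, is bounded. Any accumulation point $\hat{y}$ of $\{y^k\}$ lies in $\mathscr{S}^{c,2\ell}_{\{0,m\}}$ for every fixed $\ell$, since for $k\geq\ell$ one can extract a convergent subsequence of the lifted truncations $z^k|_{[0,2\ell]}$ whose limit is feasible for the $\ell$-th spectrahedron and projects to $\hat{y}$; by \reff{sdr:R:Sc} we conclude $\hat{y}\in\mathscr{R}^c_{\{0,m\}}$. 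Passing to the limit in the objective and using (i) gives $(\hat{y})_0=\| \mA \|_{\ast,\cpx}$. Finally, when the nuclear decomposition of $\mA$ over $\cpx$ is unique, the atomic measure on $S^c$ solving \reff{cnn:F=int:xom:C} is unique (via \reff{cnn:F=ui+vi}), hence $\hat{y}$ is uniquely determined as its truncated moment vector, and a bounded sequence with a unique accumulation point must converge to it.

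The main obstacle lies in part (iv): even with boundedness in hand, one must take the limit in the lifted variables $z^k$ rather than only in their truncations $y^k$, because the projected spectrahedra $\mathscr{S}^{c,2\ell}_{\{0,m\}}$ are not a priori closed. The other subtle point, needed throughout, is the archimedean property of $\mbox{Ideal}(h)+\mbox{Qmod}(g)$, which here comes for free from $h=x^Tx-1$.
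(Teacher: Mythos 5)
Your proof is correct and follows essentially the same approach as the paper for all four parts: perturb to strict positivity and apply Putinar for (i), feasibility plus monotonicity for (ii)--(iii), and boundedness via the moment/localizing constraints plus \reff{sdr:R:Sc} for (iv). One small improvement over the paper's write-up: in (iv) you explicitly pass to a convergent subsequence of the lifted truncations $z^k|_{[0,2\ell]}$ to exhibit a witness in the closed spectrahedron $\mathscr{S}^{c,2\ell}$, which correctly handles the fact that the projected set $\mathscr{S}^{c,2\ell}_{\{0,m\}}$ need not be closed a priori; the paper instead appeals to the distance estimate of Prop.~3.4 in the cited reference.
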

\begin{proof}
(i) By Lemma~\ref{monng:dual:Cpx},
for every $\eps>0$, there exist $s_1, s_2 \in \re^D$ such that
\[
1 - (s_1)^TR -(s_2)^T T>0  \mbox{ on }  S^c, \quad
\langle s_1, \mbf{a}^{re} \rangle +  \langle s_2, \mbf{a}^{im} \rangle
\geq \| \A \|_{\ast,\cpx} - \eps.
\]
By Theorem~\ref{thm:Put}, there exists $k_1$ such that
\[
1 - (s_1)^TR -(s_2)^T T  \in Q_{k_1}^c.
\]
By Lemma~\ref{lm:sos-mom:QkSc}, we can get
$
\| \mA \|_{k_1\ast,\cpx}  \geq  \| \A \|_{\ast,\cpx} - \eps.
$
The monotonicity relation \reff{rl:rhok:cpx} and the above imply that
\[
\| \mA \|_{\ast,\cpx} \geq \lim\limits_{k\to\infty}  \| \mA \|_{k\ast,\cpx}
\geq \| \mA \|_{\ast,\cpx} - \eps.
\]
Since $\eps>0$ can be arbitrarily small,
the item (i) follows directly.

(ii) If $1-(p_1^*)^T R -(p_2^*)^T T \in Q^c$, then
$1-(p_1^*)^T R -(p_2^*)^T T \in Q_{k_2}^c$ for some $k_2 \in \N$.
By Lemma~\ref{monng:dual:Cpx}, we know that
\[
\| \mA \|_{\ast,\cpx} = \langle p_1^*, \mbf{a}^{re} \rangle +
\langle p_2^*, \mbf{a}^{im} \rangle  \leq \| \mA \|_{k_2\ast,\cpx}.
\]
Then, \reff{rl:rhok:cpx} implies that
$\| \mA \|_{k\ast,\cpx} = \| \mA \|_{\ast,\cpx}$
for all $k \geq k_2$.

(iii) If $y^k \in \mathscr{R}^c_{ \{0,m\} }$ for some order $k$,
then $\| \mA \|_{k\ast,\cpx} \geq \| \mA \|_{\ast,\cpx}$,
by Lemma~\ref{monng:dual:Cpx}.
The equality
$\| \mA \|_{k\ast,\cpx} = \| \mA \|_{\ast,\cpx}$
follows from \reff{rl:rhok:cpx}.

(iv) Note that $ (z^k)_0 = (y^k)_0 = \| \mA \|_{k\ast,\cpx}$ for all $k$.
The condition $L_{1-\|x\|^2}^{(k)}(z^k)=0$ implies that
\[
(z^k)_{2e_1 + 2\bt} + \cdots (z^k)_{2e_{2n} + 2\bt}  = (z^k)_{2\bt}
\]
for all $\bt \in \N^{2n}_{[0,2k]}$.
By induction, one can easily show that
\[
(z^k)_{2\bt} \leq (z^k)_0 \quad \forall \, \bt \in \N^{2n}_{[0,2k]}.
\]
Since $y^k$ is a truncation of $z^k$
and $M_k( z^k ) \succeq 0$, we get
\[
|(y^k)_{\bt}|^2 = |(z^k)_{\bt}|^2  \leq (z^k)_{2\bt}  (z^k)_{0}
\leq |(z^k)_{0} |^2 = |(y^k)_{0} |^2 = \| \mA \|_{\ast, \cpx}^2.
\]
This shows that the sequence $\{ y^k \}$ is bounded.
For all $k\geq m/2$, it holds that
$y^k \in \mathscr{S}^{c,2k}_{ \{0,m\} }$.
The distance between $\mathscr{S}^{c,2k}_{ \{0,m\} }$ and
$\mathscr{R}^c_{ \{0,m\} }$ tends to zero as $k\to \infty$
(cf.~\cite[Prop.~3.4]{LMOPT}).
Therefore, every accumulation point $\hat{y}$ of the sequence $\{ y^k \}$
belongs to $\mathscr{R}^{c}_{ \{0,m\} }$.
This can also be implied by \reff{sdr:R:Sc}. So,
$
\hat{y} = \sum_{i=1}^r \lmd_i [(u_i, v_i)]_{0,m},
$
with $\lmd_i >0$ and $(u_i, v_i) \in S^c$.
The feasibility condition in \reff{cnn:mom:z(C)}
and the relation \reff{Raf:Taf} imply that
\[
\mA = \sum_{i=1}^r \lmd_i (u_i + \sqrt{-1} v_i)^{\otimes m}.
\]
When the nuclear decomposition of $\mA$ is unique,
$\lmd_i$ and $(u_i, v_i) \in S^c$
are also uniquely determined.
So, the accumulation point $\hat{y}$ is unique and
$y^k$ converges to a point in $\mathscr{R}^{c}_{ \{0,m\} }$
as $k \to \infty$.
\end{proof}

\section{Numerical examples}
\label{sc:num}

This section presents numerical experiments
for nuclear norms of symmetric tensors.
The computation is implemented in MATLAB R2012a,
on a Lenovo Laptop with CPU@2.90GHz and RAM 16.0G.
Algorithm~\ref{alg:R:odd} is applied for
real nuclear norms of real odd order tensors,
Algorithm~\ref{alg:even:m} is for
real nuclear norms of real even order tensors,
while Algorithm~\ref{alg:cnn:cpx} is for
complex nuclear norms of all tensors.
These algorithms can be implemented in software
{\tt Gloptipoly~3} \cite{Gloptipoly}
by calling the semidefinite program package {\tt SeDuMi} \cite{sedumi}.

Since our methods are numerical, we display only
four decimal digits for the computational results.
For a nuclear decomposition
$\mA = (u_1)^{\otimes m} + \cdots + (u_r)^{\otimes m}$,
we display it by listing the vectors
$u_1, \ldots, u_r$
column by column, from the left to right.
If one row block is not enough, we continue the display
in the bottom, separated by one blank row.

Recall that $e$ is the vector of all ones,
and $e_i$ denotes the $i$th standard unit vector
(i.e., the vector whose $i$th entry is one
and all others are zeros).
We begin with some tensor examples from
Friedland and Lim~\cite{FriLim14b}.

\begin{exm}
(\cite{FriLim14b})
(i) Consider the tensor in $\mt{S}^3(\re^2)$ such that
\[
\mA = \frac{1}{\sqrt{3}}\big(
e_1 \otimes e_1 \otimes e_2 + e_1 \otimes e_2 \otimes e_1 +
e_2 \otimes e_1 \otimes e_1
\big).
\]
We got
$ \| \mA \|_{\ast, \re} = \| \mA \|_{2\ast, \re} = \sqrt{3}$ and
$ \| \mA \|_{\ast, \cpx} = \| \mA \|_{2\ast, \cpx} = 3/2$.
It took about $1$ second.
The real nuclear decomposition
$\mA = \sum_{i=1}^3 (u_i)^{\otimes 3}$ is {\tiny
\begin{verbatim}
    0.0000   -0.7937    0.7937
   -0.5774    0.4582    0.4582
\end{verbatim} \noindent}and
the complex nuclear decomposition
$\mA = \sum_{i=1}^3 (w_i)^{\otimes 3}$ is {\tiny
\begin{verbatim}
  -0.5873 + 0.2740i   0.4582 - 0.4583i   0.6456 - 0.0566i
   0.2944 + 0.3511i  -0.0002 + 0.4582i   0.4513 + 0.0797i
\end{verbatim}}

\noindent
(ii) Consider the tensor in $\mt{S}^3(\re^2)$ such that
\[
\mA = \frac{1}{2}\big(
e_1 \otimes e_1 \otimes e_2 + e_1 \otimes e_2 \otimes e_1 +
e_2 \otimes e_1 \otimes e_1 - e_2 \otimes e_2 \otimes e_2
\big).
\]
We got $\| \mA \|_{\ast, \re} = \| \mA \|_{2\ast, \re} = 2$
and $\| \mA \|_{\ast, \cpx} = \| \mA \|_{2\ast, \cpx} = \sqrt{2}$.
It took about $1$ second.
The real nuclear decomposition
$\mA = \sum_{i=1}^3 (u_i)^{\otimes 3}$ is {\tiny
\begin{verbatim}
    0.0000   -0.7565    0.7565
   -0.8736    0.4368    0.4368
\end{verbatim} \noindent}while
the complex nuclear decomposition
$\mA = \sum_{i=1}^2 (w_i)^{\otimes 3}$ is{\tiny
\begin{verbatim}
   0.5456 - 0.3150i  -0.5456 + 0.3150i
   0.3150 + 0.5456i   0.3150 + 0.5456i
\end{verbatim} \noindent}The
nuclear norms are the same as in \cite{FriLim14b}.
\qed
\end{exm}

Next, we see some tensors of order four.

\begin{exm}
(i) Consider the tensor $\mA \in \mt{S}^4(\re^3)$ such that
\[
\mA = e^{\otimes 4} - e_1^{\otimes 4}  - e_2^{\otimes 4} - e_3^{\otimes 4}.
\]
We got $\| \mA \|_{\ast, \re} = \| \mA \|_{2\ast, \re} =12$
and $\| \mA \|_{\ast, \cpx} =  \| \mA \|_{3\ast, \cpx} \approx 11.8960$.
It took about $15$ seconds.
The real nuclear decomposition is the same as above.
The complex nuclear decomposition
$\mA = \sum_{i=1}^{9} (w_i)^{\otimes 4}$ is {\tiny
\begin{verbatim}
  -0.1152 + 0.0714i   0.0332 - 0.1001i   0.0479 - 0.1059i  -0.1102 + 0.0539i  -0.0845 + 0.8376i
   0.5316 + 0.5596i   0.6145 + 0.5968i   0.0519 - 0.1094i  -0.1068 + 0.0498i   0.0823 + 0.8373i
  -0.1186 + 0.0752i   0.0288 - 0.0968i   0.5905 + 0.5684i   0.5654 + 0.5880i   0.0022 + 0.8307i

   0.1285 + 0.7122i   0.0163 + 0.6866i   0.5921 + 0.6105i   0.5648 + 0.5379i
  -0.0124 + 0.6932i  -0.1320 + 0.7066i  -0.1017 + 0.0364i   0.0674 - 0.1137i
  -0.1158 + 0.7086i   0.1153 + 0.7018i  -0.0984 + 0.0319i   0.0711 - 0.1172i
\end{verbatim}
}

\noindent
(ii) Consider the tensor $\mA \in \mt{S}^4(\re^3)$ such that
\[
\mA  =   (e_1 + e_2 )^{\otimes 4}  +
(e_1 +  e_3)^{\otimes 4}  - ( e_2 + e_3)^{\otimes 4}.
\]
We got $\| \mA \|_{\ast, \re} = \| \mA \|_{2\ast, \re} =
\| \mA \|_{\ast, \cpx} = \| \mA \|_{2\ast, \cpx} = 12$.
It took about $12$ seconds.
The real and complex nuclear decompositions are the same as above.

\noindent
(iii) Consider the tensor $\mA \in \mt{S}^4(\re^3)$ such that
\[
\mA  =    (e_1 + e_2 - e_3)^{\otimes 4}  +
(e_1 - e_2 + e_3)^{\otimes 4}  + (-e_1 + e_2 + e_3)^{\otimes 4}
- (e)^{\otimes 4}.
\]
We got $\| \mA \|_{\ast, \re} = \| \mA \|_{2\ast, \re} =
\| \mA \|_{\ast, \cpx} = \| \mA \|_{2\ast, \cpx} = 36$.
It took about $7$ seconds.
The real and complex nuclear decompositions are the same as above.
\qed
\end{exm}

The following are some examples of complex-valued tensors.

\begin{exm}
(i) Consider the tensor $\mA \in \mt{S}^3(\cpx^3)$ such that
\[
\mA_{i_1 i_2 i_3} = \sqrt{-1}^{i_1 i_2 i_3 }.
\]
We got $\| \mA \|_{\ast, \cpx} =  \| \mA \|_{2\ast, \cpx} \approx 8.8759$.
It took about $1$ second.
The nuclear decomposition $\mA = \sum_{i=1}^5 (w_i)^{\otimes 3}$ is {\tiny
\begin{verbatim}
   0.6024 + 0.3478i  -0.6019 + 0.3475i   0.0000 - 0.6894i   0.6262 + 0.3615i  -0.0000 + 0.7230i
   0.0000 - 0.0000i   0.0000 - 0.0000i   0.0000 - 0.0000i  -0.7913 + 0.6476i   0.9565 - 0.3615i
  -0.6024 - 0.3478i   0.6019 - 0.3475i  -0.0000 + 0.6894i   0.6262 + 0.3615i   0.0000 + 0.7230i
\end{verbatim}}

\noindent
(ii) Consider the tensor $\mA \in \mt{S}^4(\cpx^3)$ such that
\[
\mA_{i_1 i_2 i_3 i_4} = ( \sqrt{-1} )^{i_1}+
( -1 )^{i_2}+  ( -\sqrt{-1} )^{i_3}+   ( 1 )^{i_4};
\]
We got $\| \mA \|_{\ast, \cpx} =
\| \mA \|_{3\ast, \cpx} \approx  26.9569$.
It took about $17$ seconds.
The nuclear decomposition $\mA = \sum_{i=1}^7 (w_i)^{\otimes 4}$
is {\tiny
\begin{verbatim}
   0.6274 + 0.5703i  -0.7090 + 0.2840i   0.2324 - 1.0695i  -0.1256 + 0.3158i
   0.6275 + 0.5699i   0.5471 + 0.1352i   0.2938 + 0.5936i   0.2671 + 0.1962i
  -0.5940 - 0.6378i   0.5286 + 0.1184i   0.2875 + 0.6043i   0.1922 + 0.2772i

  -0.1432 + 0.8472i   0.0955 + 0.9276i   0.7074 + 0.9184i
  -0.1440 + 0.8475i   0.3732 + 0.7080i  -0.0292 + 0.6956i
   0.9490 + 0.2131i   0.4154 + 0.7460i  -0.0031 + 0.6971i
\end{verbatim}
}

\noindent
(iii) Consider the tensor $\mA \in \mt{S}^5(\cpx^3)$ such that
\[
\mA_{i_1 i_2 i_3 i_4 i_5} =  (\sqrt{-1})^{ i_1 i_2 i_3 i_4 i_5} +
(-\sqrt{-1})^{ i_1 i_2 i_3 i_4 i_5} .
\]
We got $\| \mA \|_{\ast, \cpx} =
\| \mA \|_{2\ast, \cpx} \approx 49.5626$.
It took about $4.7$ seconds.
The nuclear decomposition
$\mA = \sum_{i=1}^6 (w_i)^{\otimes 5}$ is {\tiny
\begin{verbatim}
   0.2711 + 0.8335i   0.8651 + 0.0003i   0.6741 + 0.6909i
  -0.2255 - 0.6963i  -0.7224 + 0.0008i   0.7712 - 0.6030i
   0.2711 + 0.8335i   0.8651 + 0.0003i   0.6741 + 0.6909i

   0.5542 + 0.0006i   0.8654 + 0.4276i   0.1743 + 0.5348i
   1.1499 - 0.0004i  -0.3352 + 0.9198i   0.3603 + 1.1102i
   0.5542 + 0.0006i   0.8654 + 0.4276i   0.1743 + 0.5348i
\end{verbatim}
}

\noindent
(iv) Consider the tensor $\mA \in \mt{S}^6(\cpx^3)$ such that
\[
\mA_{i_1 i_2 i_3 i_4 i_5 i_6} =
( 1 + \sqrt{-1} )^{i_1+\cdots+i_6-6} + (1-\sqrt{-1})^{i_1+\cdots+i_6-6}.
\]
We got $\| \mA \|_{\ast, \cpx} =
\| \mA \|_{2\ast, \cpx} = 686 $.
It took about $4.8$ seconds.
The nuclear decomposition is
\[
\mA = \bbm 1  \\ 1 - \sqrt{-1} \\  - 2\sqrt{-1} \ebm^{\otimes 6} +
 \bbm 1  \\ 1 + \sqrt{-1} \\  2\sqrt{-1} \ebm^{\otimes 6}.
\]
\qed
\end{exm}

\begin{exm} \label{exm:A=i1+i2+i3}
Consider the tensor $\mA \in \mt{S}^3(\re^n)$ such that
\[
\mA_{i_1 i_2 i_3} = i_1 + i_2 + i_3.
\]
For a range of values of $n$,
the real and complex nuclear norms
$\| \mA \|_{\ast, \re}$ and $\| \mA \|_{\ast,\cpx}$
are reported in Table~\ref{tab:A=i1+i2+i3}.
We list the order $k$ for which
$\| \mA \|_{\ast,\F} = \| \mA \|_{k\ast,\F}$
and the length of the nuclear decomposition,
as well as the consumed time (in seconds).
\begin{table}[htb]
\caption{Nuclear norms of the tensor in Example~\ref{exm:A=i1+i2+i3}.}
\label{tab:A=i1+i2+i3}
\btab{|c|c|r|c|c|r|} \hline
$n$  & $\F$ & $\| \mA \|_{\ast, \F}$ &  $k$ & {\tt length} & {\tt time}  \\ \hline
$2$  &  $\re$  &  $13.4164$  &  $2$  &  $3$ &   0.81       \\  \hline
$2$  &  $\cpx$  & $13.2114$   &  $2$  & $3$  &  1.31      \\  \hline
$3$  &  $\re$  &  $33.6749$  &  $2$  & $3$ &    0.90     \\ \hline
$3$  &  $\cpx$  & $32.9505$   &  $2$  & $3$  &  1.92      \\ \hline
$4$  &  $\re$  &  $65.7267$  &  $2$  & $3$  &  0.93        \\ \hline
$4$  &  $\cpx$  & $64.0886$ &  $2$  & $3$  &   3.73    \\ \hline
$5$  &  $\re$  &  $111.2430$  &  $2$  & $3$  &  0.97      \\ \hline
$6$  &  $\re$  &  $171.7091$  &  $2$  & $3$  &  1.08    \\ \hline
$7$  &  $\re$  &  $248.4754$  &  $2$  & $3$  &  1.23       \\ \hline
$8$  &  $\re$  &  $342.7886$  &  $2$  & $3$  &  1.67       \\ \hline
$9$  &  $\re$  &  $455.8125$  &  $2$  & $3$  &  2.45      \\ \hline
$10$ &  $\re$  &  $588.6425$  &  $2$  & $3$  &  2.66      \\ \hline
\etab
\end{table}
For neatness, we only display nuclear decompositions
for $n=3$. The real nuclear decomposition
$\mA = \sum_{i=1}^3 (u_i)^{\otimes 3}$ is {\tiny
\begin{verbatim}
    0.3689   -0.8633    1.5317
   -0.1899   -0.3318    1.8215
   -0.7487    0.1996    2.1113
\end{verbatim} \noindent}while
the complex nuclear decomposition
$\mA = \sum_{i=1}^3 (w_i)^{\otimes 3}$ is {\tiny
\begin{verbatim}
   0.0851 + 0.6890i   1.4795 - 0.0001i   0.5552 + 0.4168i
  -0.2504 + 0.5318i   1.7763 + 0.0001i   0.5878 + 0.0477i
  -0.5858 + 0.3745i   2.0730 + 0.0004i   0.6203 - 0.3214i
\end{verbatim}}
\qed
\end{exm}

\begin{exm} \label{exm:cos(1/i1+c+1/i4)}
Consider the tensor $\mA \in \mt{S}^4(\re^n)$ such that
\[
\mA_{i_1 i_2 i_3 i_4} =
\cos \left(  \frac{1}{i_1}+\frac{1}{i_2}+\frac{1}{i_3}+\frac{1}{i_4} \right).
\]
The nuclear norms, the order $k$ for which
$\| \mA \|_{\ast, \F} = \| \mA \|_{k\ast, \F}$,
the lengths of the nuclear decompositions,
and the consumed time (in seconds)
are displayed in Table~\ref{tab:cos(1/i1+c+1/i4)}
for a range of values of $n$.
\begin{table}[htb]
\caption{Nuclear norms of the tensor in Example~\ref{exm:cos(1/i1+c+1/i4)}.}
\label{tab:cos(1/i1+c+1/i4)}
\btab{|c|c|r|c|c|r|} \hline
$n$  &  $\F$    &  $\| \mA \|_{\ast, \F}$ &  $k$ & {\tt length} & {\tt time}  \\ \hline
$2$  &  $\re$   &  $4.9001$  &  2  &   4  &  0.93   \\  \hline
$2$  &  $\cpx$  &  $3.9911 $  &  2  &  4  &  2.04    \\  \hline
$3$  &  $\re$   &  $  10.7246$  &  2   &   4  & 1.02     \\ \hline
$3$  &  $\cpx$  &  $ 8.1627 $  &  2   &  4   &  10.05    \\ \hline
$4$  &  $\re$  &  $ 18.0100 $  &  2   &  4  &  1.14     \\ \hline
$4$  &  $\cpx$  &  $13.1108$  &  2   &  4   &   131.13    \\ \hline
$5$  &  $\re$  &  $26.9770 $  &  2   &  4   &  1.33    \\ \hline
$6$  &  $\re$  &  $37.8395 $  &  2   &  4   &  1.86     \\ \hline
$7$  &  $\re$  &  $50.7373 $  &  2   &  4   &  2.78    \\ \hline
$8$  &  $\re$  &  $65.7485 $  &  2  &  4    &  4.75   \\ \hline
$9$  &  $\re$  &  $ 82.9121 $  &  2   &  4   &  9.30    \\ \hline
$10$ &  $\re$  &  $102.2442 $  &  2   &  4   &  21.49    \\ \hline
\etab
\end{table}
For neatness, we only display nuclear decompositions
for $n=3$. The real nuclear decomposition is
$\mA = (u_1)^{\otimes 4} + (u_2)^{\otimes 4}
-(u_3)^{\otimes 4} -(u_4)^{\otimes 4} $,
where $u_1,u_2,u_3,u_4$ are respectively given as {\tiny
\begin{verbatim}
   -0.0261    0.9989   -0.6615    1.0988
    0.7131    0.1816    0.2850    0.9044
    0.9287   -0.1114    0.5965    0.7863
\end{verbatim} \noindent}The
complex nuclear decomposition
$\mA = \sum_{i=1}^4 (w_i)^{\otimes 4}$ is {\tiny
\begin{verbatim}
  -0.0001 - 0.1505i   0.2673 + 0.7374i   0.7395 + 0.2678i   0.6659 + 0.6676i
  -0.0010 + 0.3845i   0.5967 + 0.3270i   0.3287 + 0.5959i   0.5021 + 0.5032i
  -0.0013 + 0.5481i   0.6771 + 0.1666i   0.1680 + 0.6759i   0.4172 + 0.4181i
\end{verbatim}
}
\qed
\end{exm}

For a tensor $\mA \in \mt{S}^m( \cpx^n )$, define
\be \label{df:A(x)}
\mA(x)  := \sum_{i_1,\ldots, i_m = 1}^n
\mA_{i_1 \ldots i_m} \cdot x_{i_1} \cdots x_{i_m}.
\ee
Clearly, $\mA(x)$ is a homogeneous polynomial
in $x:=(x_1,\ldots, x_n)$ and of degree $m$.
There is a bijection between
the symmetric tensor space  $\mt{S}^m( \cpx^n )$
and the space of homogeneous polynomials of degree $m$
(cf.~\cite{GPSTD,OedOtt13}). So, we can equivalently display $\mA$
by showing the polynomial $\mA(x)$.
Moreover, the decomposition $\mA = \sum_{i=1}^r \pm (u_i)^{\otimes m}$
is equivalent to $\mA(x) = \sum_{i=1}^r \pm (u_i^Tx)^m$.
Thus, we can also display a nuclear decomposition
by writing $\mA(x)$ as a sum of power of linear forms.

\begin{exm}\label{exm:poly:A(x)}
(i) Consider the tensor $\mA \in \mt{S}^3(\re^3)$ such that
\[
\mA(x) = x_1x_2x_3.
\]
We got $\| \mA \|_{\ast, \re} = \| \mA \|_{2\ast, \re} =
\| \mA \|_{\ast, \cpx} = \| \mA \|_{2\ast, \cpx} = \sqrt{3}/2$.
The real nuclear decomposition of $\mA$ is given as
%
%
\[
\baray{rl}
\mA(x) = &\frac{1}{24}
\Big( (-x_1-x_2+x_3)^3 + (-x_1+x_2-x_3)^3 \\
& \qquad + (x_1-x_2-x_3)^3 + (x_1+x_2+x_3)^3 \, \Big).
\earay
\]
The above also serves as a complex nuclear decomposition.

\noindent
(ii) Consider the tensor $\mA \in \mt{S}^4(\re^4)$ such that
\[
\mA(x) = x_1x_2x_3x_4.
\]
We got $\| \mA \|_{\ast, \re} = \| \mA \|_{2\ast,\re} =
\| \mA \|_{\ast, \cpx} = \| \mA \|_{2\ast,\cpx} = 2/3$.
The real nuclear decomposition is given as
%
%
\[
\baray{rl}
\mA(x)\,\,  = & \frac{1}{192}
\Big( (-x_1-x_2+x_3+x_4)^4 +  (-x_1+x_2-x_3+x_4)^4 + \\
& \qquad \,(-x_1+x_2+x_3-x_4)^4 +  (+x_1+x_2+x_3+x_4)^4 - \\
& \qquad \,(-x_1+x_2+x_3+x_4)^4 -(x_1-x_2+x_3+x_4)^4 - \\
& \qquad \,(x_1+x_2-x_3+x_4)^4 -(x_1+x_2+x_3-x_4)^4 \, \Big).
\earay
\]
The above also serves as a complex nuclear decomposition.

\noindent
(iii) Consider the tensor $\mA \in \mt{S}^4(\re^2)$ such that
\[
\mA(x) = x_1^2 x_2^2.
\]
We got $\| \mA \|_{\ast, \re} = \| \mA \|_{2\ast, \re} = 1$.
The real nuclear decomposition is
\[
\mA(x) = \frac{1}{12} (-x_1+x_2)^4 + \frac{1}{12} (x_1+x_2)^4
-\frac{1}{6} x_1^4 -\frac{1}{6} x_2^4.
\]
The complex nuclear norm
$\| \mA \|_{\ast,\cpx} = \| \mA \|_{2\ast,\cpx} = 2/3$,
and the decomposition is
\[
\mA(x) = \frac{1}{24}
\Big( (x_1-x_2)^4 + (x_1+x_2)^4 - (x_1+\sqrt{-1}x_2)^4 - (x_1-\sqrt{-1}x_2)^4 \Big).
\]

\noindent
(iv) Consider the tensor $\mA \in \mt{S}^4(\re^3)$ such that
\[
\mA(x) = x_1^2 x_2^2 + x_2^2 x_3^2 + x_1^2 x_3^2.
\]
We got $\| \mA \|_{\ast, \re} = \| \mA \|_{2\ast, \re} = 2$.
The real nuclear decomposition is
\[
\baray{rl}
\mA(x) = & \frac{1}{24} \Big( (x_1-x_2+x_3)^4 +
(x_1+x_2-x_3)^4 +  (-x_1+x_2+x_3)^4 + \\
& \qquad + (x_1+ x_2+x_3)^4 \Big)
- \frac{1}{6} \Big( x_1^4 + x_2^4 + x_3^4 \Big).
\earay
\]
The complex nuclear norm
$\| \mA \|_{\ast,\cpx} = \| \mA \|_{2\ast,\cpx} = 5/3$,
and the decomposition is
\[
\baray{rl}
\mA(x) \,\, = & \frac{1}{36} \Big(
 (-x_1+x_2+x_3)^4+ (x_1-x_2+x_3)^4+ (x_1+x_2-x_3)^4+  (x_1+x_2+x_3)^4 \\
& \qquad  - (x_1-\sqrt{-1} x_3)^4 - (x_2-\sqrt{-1} x_3)^4 - (x_1-\sqrt{-1} x_2)^4\\
& \qquad  - (x_1+\sqrt{-1} x_3)^4 - (x_2+\sqrt{-1} x_3)^4 - (x_1+\sqrt{-1} x_2)^4
 \Big).
\earay
\]

\noindent
(v) Consider the tensor $\mA \in \mt{S}^4(\re^3)$ such that
\[
\mA(x) = (x_1^2 + x_2^2 + x_3^2)^2.
\]
We got $\| \mA \|_{\ast, \re} = \| \mA \|_{2\ast, \re} =
\| \mA \|_{\ast, \cpx} = \| \mA \|_{2\ast, \cpx} = 5$,
with the nuclear decomposition
\[
\baray{rl}
\mA(x) = & \frac{1}{12} \Big( (x_1-x_2+x_3)^4 +
(x_1+x_2-x_3)^4 +  (-x_1+x_2+x_3)^4 + \\
& \qquad + (x_1+ x_2+x_3)^4 \Big)  + \frac{2}{3} \Big( x_1^4 + x_2^4 + x_3^4 \Big).
\earay
\]
The above also serves as a complex nuclear decomposition.
\qed
\end{exm}

\begin{exm} \label{sym:abc}
For $a,b,c \in \re^n$, the symmetrization of the
rank-$1$ nonsymmetric tensor $a\otimes b \otimes c$ is
\[
\baray{r}
\mbf{sym}(a \otimes b \otimes c) := \frac{1}{6}
(a \otimes b \otimes c +  a \otimes c \otimes b
+ b \otimes a \otimes c \quad \, \\
  + b \otimes c \otimes a +
c \otimes a \otimes b + c \otimes b \otimes a \, ).
\earay
\]
One wonders whether $\| \mbf{sym}(a \otimes b \otimes c) \|_{\ast, \re}
= \|a\| \cdot \|b\| \cdot \|c\|$ or not.
Indeed, this is usually not true. Typically, we have the inequalities
\[
\| \mbf{sym}(a \otimes b \otimes c) \|_{\ast, \cpx}
< \| \mbf{sym}(a \otimes b \otimes c) \|_{\ast, \re}
< \|a\|  \cdot \|b\| \cdot \|c\|.
\]
For instance, consider the following tensor in $\mt{S}^3(\re^3)$
\[
\mA = \mbf{sym}( e_1 \otimes (e_1+e_2) \otimes (e_1+e_2+e_3) ).
\]
We can compute that
$
\| \mA \|_{\ast, \cpx} \approx 2.2276,
$
$
\| \mA \|_{\ast, \re} \approx  2.4190,
$
but
\[
\| e_1 \| \cdot \| e_1+e_2 \| \cdot \| e_1+e_2+e_3 \| = \sqrt{6} \approx  2.4495.
\]
The computed real nuclear decomposition
$\mA = \sum_{i=1}^5 (u_i)^{\otimes 3}$ is {\tiny
\begin{verbatim}
   -0.2896    0.0149   -0.4750   -0.0947    1.0423
    0.1803   -0.5617    0.1042   -0.2944    0.5806
   -0.2891   -0.3132    0.3114    0.1865    0.2630
\end{verbatim} \noindent}The
computed complex nuclear decomposition
$\mA = \sum_{i=1}^8 (w_i)^{\otimes 3}$ is {\tiny
\begin{verbatim}
  -0.2795 + 0.2861i  -0.2882 + 0.2378i  -0.4808 + 0.8488i  -0.2489 + 0.0952i
   0.1094 + 0.2837i  -0.0298 + 0.2075i  -0.2776 + 0.4375i   0.1376 + 0.3928i
  -0.2009 + 0.0696i   0.2301 + 0.1457i  -0.1146 + 0.2252i   0.1359 + 0.1297i

   0.1212 + 0.0018i   0.3286 - 0.1764i   0.2244 - 0.1049i   0.2196 - 0.1357i
   0.0304 - 0.0450i   0.2410 + 0.1640i   0.1103 + 0.2279i   0.2235 + 0.2601i
   0.0268 + 0.0974i  -0.0096 + 0.2903i   0.1464 - 0.1210i   0.0750 + 0.1106i
\end{verbatim}}
\noindent
However, if $a=b=c$, then
$\| \mbf{sym}(a\otimes b\otimes c) \|_{\ast, \F} = \| a \|^3$
for $\F = \re, \cpx$.
\qed
\end{exm}

\begin{exm}
(Sum of Even Powers)
For an even order $m$, consider the tensors $\mA \in \mt{S}^m(\re^n)$
of the form such that
\be \label{SOEP}
\mA = (a_1)^{\otimes m} + \cdots + (a_r)^{\otimes m},
\ee
with $a_1, \ldots, a_r \in \re^n$.
Such a tensor is called a sum of even powers (SOEP).
Interestingly, for all SOEP tensors as in \reff{SOEP}, we have
\be \label{tnn:spow}
\| \mA \|_{\ast, \re} = \| \mA \|_{\ast, \cpx}  =  \| a_1 \|^{m} + \cdots + \| a_r \|^{m}.
\ee
Clearly,
$\| \mA \|_{\ast, \cpx} \leq  \| \mA \|_{\ast, \re}
\leq \sum_{i=1}^r \| a_i \|^{m}.$
The reverse inequalities are actually also true.
Let $\mB$ be the tensor such that $\mB(x) = (x^Tx)^{m/2}$.
Then, $\| \mB \|_{\sig, \cpx} = 1$. By the duality relation, we get
\[
\| \mA \|_{\ast, \cpx} \geq
\mA \bullet \mB = \sum_{i=1}^r \mB(a_i)  =
\sum_{i=1}^r \| a_i \|^m.
\]
So, \reff{tnn:spow} is true. It can also be proved by
applying Lemma~4.1 of \cite{FriLim14b}.
Moreover, every real nuclear decomposition of an SOEP tensor
must also be in the SOEP form.
This can be shown as follows. Suppose $\mA = \mA_1 - \mA_2$
is a real nuclear decomposition, with $\mA_1, \mA_2$ being SOEP tensors
such that $\| \mA \|_{\ast, \re} =
\| \mA_1 \|_{\ast, \re} + \| \mA_2 \|_{\ast, \re}$.
Then, $\mA_1 = \mA + \mA_2$ and
$
\| \mA_1 \|_{\ast, \re} =  \| \mA \|_{\ast, \re} + \| \mA_2 \|_{\ast, \re}
$
by \reff{tnn:spow}.
So, we must have $\| \mA_2 \|_{\ast, \re}=0$, hence $\mA_2=0$.
This shows the real nuclear decomposition is also SOEP.
For instance, consider the following SOEP tensor $\mA \in \mt{S}^4(\re^3)$
\[
\mA = \sum_{i=1}^3 \left( (e + e_i)^{\otimes 4} + (e - e_i)^{\otimes 4} \right).
\]
Algorithms~\ref{alg:even:m} and \ref{alg:cnn:cpx}
confirmed that $\| \mA \|_{\ast, \re} = \| \mA \|_{\ast, \cpx} = 120$.
\qed
\end{exm}

\section{Extensions to nonsymmetric tensors}
\label{sc:exten}

The methods in this paper can be naturally extended to nonsymmetric tensors.
A similar discussion was made in \cite{TanSha15}.
For convenience, we show how to do this for a nonsymmetric cubic tensor
$\mA \in \re^{n_1 \times n_2 \times n_3}$.
Clearly, its real nuclear norm can be computed as
\be \label{nsymA:nn3}
\| \mA \|_{\ast,\re} = \min \left\{ \sum_{i=1}^r \lmd_i
\left| \baray{c}
\mA = \Sig_{i=1}^r  \lmd_i v^{(i,1)} \otimes v^{(i,2)} \otimes v^{(i,3)}, \\
\lmd_i \geq 0, \, \| v^{(i,j)} \| = 1, \, v^{(i,j)} \in \re^{n_j}
\earay\right.
\right \}.
\ee
One can similarly show that
$\| \mA \|_{\ast,\re}$ is equal to the minimum value
of the optimization problem
\be  \label{mOp:nsyA:m=3}
\left\{\baray{rl}
 \min  &  \int 1 \mt{d} \mu  \\
s.t. &  \mA = \int  x^{(1)} \otimes x^{(2)} \otimes x^{(3)} \mt{d} \mu, \,\,
  \mu \in \mathscr{B}(T).
\earay \right.
\ee
In the above, the variables  $x^{(j)} \in \re^{n_j}$,
and $\mathscr{B}(T)$ is the set of Borel measures supported on the set
\[
T := \big\{ (x^{(1)}, x^{(2)}, x^{(3)}):
\| x^{(1)} \| = \| x^{(2)} \| = \| x^{(3)} \| = 1 \big  \}.
\]
Similarly, we can define the cone of moments
(denote $[n] :=\{1,\ldots,n\}$)
\be
\mathscr{R}_{ \{0, 3\} }^{n_1,n_2,n_3} := \left\{
y \in \re^{ 1 + n_1n_2n_3 }
\left| \baray{c}
 \exists \mu \in \mathscr{B}(T) \quad s.t. \\
\,  (y)_{ijk} = \int (x^{(1)})_i  (x^{(2)})_j (x^{(3)})_k  \mt{d} \mu  \\
i \in [n_1],\, j \in [n_2],\, k \in [n_3],  \\
\quad \mbox{ or } \quad  i=j=k=0
\earay \right.
\right\}.
\ee
One can show that \reff{mOp:nsyA:m=3} is equivalent to
\be \label{yOpt:ns:m=3}
\left\{\baray{rl}
\min  &  (y)_{000}  \\
s.t. &  (y)_{ijk} =  \mA_{ijk} \, \, (\forall i,j,k), \\
 &  y \in \mathscr{R}_{ \{0, 3\} }^{n_1,n_2,n_3}.
\earay \right.
\ee
A similar version of Algorithm~\ref{alg:R:odd} can be
applied to solve \reff{yOpt:ns:m=3}.
%
%

\begin{exm}
Consider the nonsymmetric tensor
$\mA \in \re^{2 \times 2 \times 2}$ such that
\[
\mA_{ijk} = i -j -k.
\]
By solving \reff{yOpt:ns:m=3},
we get $\| \mA \|_{\ast, \re} = 6.0000$.
A real nuclear decomposition of $\mA$ is given as {\tiny
\[
\left[ \baray{r}  1.4363  \\  0.3140   \earay \right] \otimes
\left[ \baray{r}   -0.9146 \\  -1.1516   \earay \right] \otimes
\left[ \baray{r}   0.9296  \\  1.1394 \earay \right] +
\left[ \baray{r}   0.7375 \\   1.0525    \earay \right] \otimes
\left[ \baray{r}     -0.2430 \\  -1.2616     \earay \right] \otimes
\left[ \baray{r}    0.5250 \\   1.1727  \earay \right]
\]
\[
+\left[ \baray{r}  0.5484  \\  0.6978   \earay \right] \otimes
\left[ \baray{r}   0.8846  \\  0.0732    \earay \right] \otimes
\left[ \baray{r}  0.6501 \\  -0.6040   \earay \right].
\]
}
\end{exm}

\bigskip
When $\F = \cpx$, we can similarly compute the
complex nuclear norm $\| \mA \|_{\ast,\cpx}$,
by considering each $x^{(j)}$ as a complex variable.
For nonsymmetric tensors,
it is usually much harder to compute the nuclear norm
$\| \mA \|_{\ast,\re}$ or $\| \mA \|_{\ast,\cpx}$.
This is because the variable $x$ has much higher dimension
than for the case of symmetric tensors,
which makes the moment optimization problem like
\reff{yOpt:ns:m=3} very difficult to solve.

\bigskip
\noindent
{\bf Acknowledgement}
The author would like to thank Shmuel Friedland
and Lek-Heng Lim for comments on tensor nuclear norms.
The research was partially supported by the NSF grant
DMS-1417985.

\end{document}